\title [Line arrangements: classification and monodromy]{Pencil type line arrangements of low degree: classification and monodromy}
\date {21 May 2013}
\numberwithin{equation}{section}
\theoremstyle{plain}
\newtheorem{theorem}[subsection]{Theorem}
\newtheorem{lemma}[subsection]{Lemma}
\newtheorem{prop}[subsection]{Proposition}
\theoremstyle{definition}
\newtheorem{remark}[subsection]{Remark}
\newtheorem{definition}[subsection]{Definition}
\newtheorem{example}[subsection]{Example}
\newcommand{\A}{\mathcal{A}}
\newcommand{\LL}{{\mathcal L}}
\newcommand{\Ka}{{\mathcal K}}
\newcommand{\C}{\mathbb{C}}
\newcommand{\F}{\mathbb{F}}
\newcommand{\Z}{\mathbb{Z}}
\newcommand{\Q}{\mathbb{Q}}
\newcommand{\T}{\mathbb{T}}
\renewcommand{\k}{\Bbbk}
\DeclareMathOperator{\Hom}{Hom}
\begin{document}

\date{4 June 2013}

\author[A. Dimca]{Alexandru Dimca$^1$}
\address{Univ. Nice Sophia Antipolis, CNRS,  LJAD, UMR 7351, 06100 Nice, France.}
\email{dimca@unice.fr}

\author[D.~Ibadula]{Denis~Ibadula}
\address{Ovidius University, Faculty of Mathematics and Informatics, 124 Mamaia Blvd., 900527 Constan\c{t}ta, Romania}
\email{denis@univ-ovidius.ro}

\author[A.~D.~Macinic]{Daniela Anca~Macinic$^2$}
\address{Simion Stoilow Institute of Mathematics, 
P.O. Box 1-764,
RO-014700 Bucharest, Romania}
\email{Anca.Macinic@imar.ro}

\thanks{$^1$ Supported by Institut Universitaire de France} 

\thanks{$^2$ Supported by a grant of the Romanian National Authority for Scientific
Research, CNCS  UEFISCDI, project number PN-II-RU-PD-2011-3-0149} 

\subjclass[2010]{Primary 52C35; 14C21 Secondary 58K10}

\keywords{hyperplane arrangement, nets, monodromy}

\begin{abstract}
The complete classification of $(3,3)$-nets and of $(3,4)$-nets with only double and triple points is given. Up to lattice isomorphism, there are exactly $3$ effective possibilities in each case, and some of these provide new examples of pencil-type line arrangements.
For arrangements consisting $\leq 14$ lines and having points of multiplicity $\leq 5$, we show that the
non-triviality of the monodromy on the first cohomology $H^1(F)$ of the associated Milnor fiber $F$ implies the arrangement is of reduced pencil-type.
In particular,  the monodromy is determined by the combinatorics in such cases.
\end{abstract}

\maketitle

\tableofcontents

\section{Introduction}
\label{intro}

Let $\mathcal{A}$ be  an essential arrangement  of $d$ lines in $\mathbb{P}^2\C$, given by $L_i=0, \; i=\overline{1,d}$, where $L_i$ 
are linear forms.  We denote by $M$ be the complement of the set of lines of $\A$ in $\mathbb{P}^2 \C$. Let $F$ be the the affine surface in $\mathbb{C}^3$  given by $Q=1$, with $Q=\Pi_{i}L_i$, the 
  Milnor fiber of the arrangement $\A$, that comes endowed with the monodromy action $h:F\rightarrow F$, $h(x)=exp(2\pi i/d)\cdot x$.

\medskip

It is an interesting open question whether the monodromy operator $h^1:H^1(F) \to H^1(F)$ is combinatorially determined, i.e. determined by the intersection lattice $L(\A)$. 

\medskip
Assume for the moment that the line arrangement $\A$ has only double and triple points. Then it is known that $h^1:H^1(F) \to H^1(F)$ is trivial unless $d=3m$ for some integer $m\geq 1$, and then only the eigenvalues $1$, $\epsilon= \exp(2\pi i/3)$ and $\epsilon ^2$ are possible, see for instance \cite{D2}, Cor. 6.14.15.

 Even in this special case, it is not known even whether the first Betti number $b_1(F)$ is determined by the combinatorics.
However, in a recent paper \cite{Libgober}, A. Libgober has followed the approach started in \cite{CL} and has shown that if $h^1 \ne Id$, then necessarily 
$\A$ is a (reduced) pencil-type arrangement, i.e. there is a pencil $aQ_1+bQ_2$ of curves of degree $m$ in $\mathbb{P}^2\C$
such that if we set $Q_3=Q_1+Q_2$, then $Q=Q_1\cdot Q_2\cdot Q_3$. We say also in such a case that $\A$ is a $(3,m)$-net. In the case $m=3$, an elementary proof of Libgober's result is given below in Proposition \ref{libg}, in order to prepare the reader for the more complex, but similar proof of Theorem \ref{withcvaduple}.

The main results of this paper are as follows.

In section 2 we give  the complete classification of $(3,3)$-nets, see Theorem \ref{3nets}, and of $(3,4)$-nets with only double and triple points, see Theorem \ref{3,4-nets}. Up to lattice isomorphism, there are exactly $3$ effective possibilities in each case, and some of these provide new examples of pencil-type line arrangements.

The proofs depend on the description given by Urz\'ua \cite{Urzua} (see also Stipins   \cite{St}) of the $(3,q)$-nets for $q = 3$ and $q = 4$.
In both these papers, the classification of nets considers only to the realisation of the multiple points given by the Latin squares, while here we take into account all multiple points of the corresponding arrangement.

In section 3, using a technique inspired from \cite{MP}, we show that for arrangements of $\leq 14$ lines in $\mathbb{P}^2\C$  with points of multiplicities  $\leq 5$, the non-triviality of $h^1$ implies that the arrangement is of reduced pencil-type, see Theorem \ref{withcvaduple}.
In particular, this shows that  the monodromy is determined by the combinatorics in all such cases.
This result is the best possible as far as only {\it reduced} pencils are considered: indeed, there is a line arrangement $\A$ with $|\A|=15$ and having only points of multiplicity $\leq 6$, where the non-triviality of $h^1$ comes from the existence of a {\it non-reduced} pencil, alias a multinet, see
Remark \ref{nonreduced}.

A final point on notation: $M$ denotes the complement of the line arrangement in this Introduction and in section 3, and a (Latin square) matrix in section 2.

\section{Classification of (3,3) and (3,4) nets}
\label{sect1}

 \begin{definition}
\label{mixt}
Let $\mathcal{A}=\mathcal{A}_1\cup \mathcal{A}_2\cup \mathcal{A}_3$ be a  $(3,q)$-net (see \cite[Def 1.1]{Y}). We call { \it mixed} triple points the points of $\A$ that appear as intersections of three lines, one in $\A_1$, one in $\A_2$, and one in $\A_3$. This is exactly the set of base points of the corresponding pencils, i.e. it is given by $Q_1=Q_2=0$.
\end{definition}

We need to recall the relation between $(3,q)$-nets and Latin squares.

A {\it Latin square} is a $q \times q$ matrix filled with $q$ different symbols, each occurring exactly once in each row and exactly once in each column. The symbols in our case are the numbers $\{1,2, \dots, q\}$. Latin squares are multiplication tables of finite quasigroups. 
If $\mathcal{A}=\mathcal{A}_1\cup \mathcal{A}_2\cup \mathcal{A}_3$ is a  $(3,q)$-net, then the $q^2$ mixed triple points are encoded  by  a $q \times q$ Latin square (see, for instance, \cite{St}). First choose an order for the lines on each of the subarrangements $\A_1, \A_2, \A_3$. Say, $\A_j=\{L^j_1, L^j_2, \dots, L^j_q \}, \; 1 \leq j \leq 3$. Then, the element at the intersection of the line $k$ and column $l$ in the corresponding Latin square is the label $\alpha (k,l) \in \{1,2, \dots, q \}$ of the line in $\A_3$ that passes through the intersection $L^1_k \cap L^2_l$, i.e. $L^1_k \cap L^2_l \cap
L^3_{\alpha(k,l)} \ne \emptyset$.

Since we are interested in the realisation of the arrangement $\A$ as a curve, the ordering on the lines inside a subarrangement, and respectively the ordering of the subarrangements, are not relevant. Accordingly, one may define an equivalence relation on the set of $q \times  q$ Latin squares. The equivalence class (referred further as {\it main class},  as in \cite{Urzua}) of a Latin square $M$ contains all the Latin squares obtained by rearranging the lines, columns or symbols of $M$ (this corresponds to reordering the lines inside the subarrangements $\A_i$), respectively by permuting the sets of lines, columns and symbols among them (this corresponds to reordering the labels $\{1,2,3\}$ of the subarrangements $\A_1, \;\A_2, \;\A_3$). An inventory of main classes of Latin squares for $q \leq 6$ is given in \cite[Sect.4]{Urzua}; it follows immediately that, up to lattice isomorphism, there is only one $(3,2)$-net. 

We follow the cases $q=3$ and $q=4$.  

 Let us consider first the case $q=3$, when there is a unique main class of Latin squares (see for instance \cite{K}), represented by:

\begin{equation}
\label{square3}
 M:= \begin{array}{ccc}
1 & 2 & 3 \\
3 & 1 & 2\\
2 & 3 & 1 \end{array}
\end{equation}

\begin{theorem} 
\label{3nets}
Let $\mathcal{A}=\mathcal{A}_1\cup \mathcal{A}_2\cup \mathcal{A}_3$ be an arrangement of $9$ lines in $\mathbb{P}^2 \C$ defined by a $(3,3)$-net. Then exactly one of the following situations holds:
\begin{enumerate}
	\item If $\mathcal{A}$ has only triple points (i.e. no double points and $12$ triple points: $9$ mixed triple points and one triple point in each subarrangement $\mathcal{A}_i$, $i=\overline{1,3}$), then $\mathcal{A}$ is lattice isomorphic to the Ceva arrangement.

	\item If $\mathcal{A}$ has $9$ double points (and thus  the only triple points are the $9$ mixed triple points), then $\mathcal{A}$ is lattice isomorphic to the  Hesse arrangement ($\mathcal{A}$ is the union of three singular fibers out of the four singular fibers of the Hesse pencil $a(x^3+y^3+z^3)+bxyz$). 
	
	\item If $\mathcal{A}$ has $6$ double points (and thus $10$ triple points: $9$ mixed triple points and a triple point in one of the subarrangements), then, up to a lattice isomorphism, the lines of $\mathcal{A}$ are given by: $L_1=(y)$, $L_2=(\frac{1}{b}x+y+z)$, $L_3=(\frac{b}{b-1}x+z)$, $L_4=(x)$, $L_5=(x+by+z)$, $L_6=(by+z)$, $L_7=(x+b(1-b)y)$, $L_8=(x+y+z)$, $L_9=(z), \; b\in \mathbb{C}\setminus \{0, 1, \epsilon\},\;\epsilon^3=-1$.
\end{enumerate}
\end{theorem}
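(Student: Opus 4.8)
The plan is to first reduce everything to a single combinatorial invariant, then coordinatize and solve. Since $q=3$ admits the unique cyclic main class \ref{square3}, the $9$ mixed triple points (Definition \ref{mixt}) are combinatorially fixed, and by the net axioms \cite[Def.~1.1]{Y} two lines from different subarrangements meet only at one of these $9$ base points, each of which carries one line from each $\A_i$ and so has multiplicity exactly $3$. Hence every remaining multiple point is an intersection inside a single $\A_i$: as each $\A_i$ consists of three distinct lines, it either is concurrent (one extra triple point) or forms a triangle (three double points), and no two such interior points can coincide. Writing $a$ for the number of concurrent subarrangements, the pair (number of double points, number of triple points) equals $(0,12),(6,10),(3,11),(9,9)$ for $a=3,1,2,0$. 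So I must identify the realizations for $a=3,1,0$ and rule out $a=2$.

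Next I would coordinatize following Urz\'ua \cite{Urzua}. Suppose two subarrangements, say $\A_1,\A_2$, are concurrent, and place their centers at $[1:0:0]$ and $[0:1:0]$, so the lines of $\A_1$ are $L^1_i\colon y=s_iz$ and those of $\A_2$ are $L^2_j\colon x=t_jz$; then the base points form the grid $L^1_i\cap L^2_j=[t_j:s_i:1]$. The cyclic square \ref{square3} forces the three lines of $\A_3$ to be the three transversals of this grid, each meeting one point per row and per column, and realizability becomes the vanishing of the three associated $3\times3$ collinearity determinants $D_1,D_2,D_3$.

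The crux is that this system is circulant. With $\sigma$ the cyclic shift of indices, $S=(s_1,s_2,s_3)$, $T=(t_1,t_2,t_3)$ and $U=(\sigma-\sigma^2)S$, the conditions read $D_k=(\sigma^{k-1}T)\cdot U=0$; diagonalizing $\sigma$ by the discrete Fourier transform and writing $\hat S_j,\hat T_j$ for the resulting coordinates, they collapse to $\hat S_1\hat T_2=\hat S_2\hat T_1=0$. Since the lines in each class are distinct, neither $S$ nor $T$ is constant, and a short case analysis shows the only admissible solutions have $S$ and $T$ in the span of $(1,1,1)$ and a single non-trivial eigenvector of $\sigma$. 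Normalizing by the transformations fixing the two centers, this says exactly that the $s_i$ and the $t_j$ are the cube roots of unity, i.e.\ $\A$ is the Ceva arrangement $(x^3-y^3)(y^3-z^3)(z^3-x^3)=0$, in which $\A_3$ is automatically concurrent. Thus $a\geq2$ forces $a=3$, which both excludes $a=2$ and proves that case (1) is lattice isomorphic to Ceva.

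Finally, for $a\leq1$ I would center at most one pencil and solve the analogous, now non-circulant, collinearity conditions directly. The solutions with exactly one concurrent subarrangement form, after fixing the frame, a one-parameter family; reading it off should reproduce, up to lattice isomorphism, the explicit lines $L_1,\dots,L_9$ of case (3), whose single interior triple point is $[0:0:1]=L_1\cap L_4\cap L_7$. The solutions with no concurrent subarrangement are rigid and are recognized as three of the four singular triangles of the Hesse pencil $\lambda(x^3+y^3+z^3)+\mu xyz$, giving case (2). It remains to check that $b\in\{0,1,\epsilon\}$ with $\epsilon^3=-1$ are precisely the values where two of the $L_i$ coincide or extra concurrences appear (degenerating $\A$ to Ceva or to a non-reduced member), so they must be excluded; since the three counts $(0,12),(9,9),(6,10)$ are pairwise distinct, exactly one situation holds. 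I expect the main obstacle to be not the lattice bookkeeping nor the circulant computation, but the complete solution of the realizability equations when $a\leq1$ together with the clean identification of these degenerate values of $b$.
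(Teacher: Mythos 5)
Your combinatorial reduction and your treatment of the case $a\geq 2$ are correct, and this part genuinely differs from the paper: the paper rules out arrangements with $11$ triple points by a direct linear-algebra computation on Urz\'ua's two-parameter equations for realizable $(3,3)$-nets, whereas your circulant argument is synthetic and self-contained. Indeed, the three collinearity determinants do collapse (after diagonalizing the shift) to $\hat S_1\hat T_2=\hat S_2\hat T_1=0$, non-constancy of $S$ and $T$ then forces both into the span of $(1,1,1)$ and a single nontrivial eigenvector, and normalization gives the cube roots of unity; so two concurrent subarrangements force the Ceva arrangement, even up to projective equivalence rather than mere lattice isomorphism. This both excludes $a=2$ and settles item (1).

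The genuine gap is the case $a\leq 1$, which is exactly where the remaining content of the theorem lies and which you explicitly defer (``should reproduce'', ``I expect the main obstacle to be\dots''). Item (3) asserts the existence of $(3,3)$-nets with $10$ triple points together with a specific one-parameter family of equations and the precise excluded values $b\notin\{0,1,\epsilon\}$; nothing in your proposal establishes this, since you neither solve the non-circulant collinearity system nor verify that the displayed lines form such a net (the paper gets this by specializing Urz\'ua's parametrization at $c=b$). Note, moreover, that your plan is heavier than needed for the uniqueness statements: once the combinatorial reduction is in place, the intersection lattice of a net with a fixed value of $a$ is determined by the Latin square up to relabelling --- for $a=1$ one uses that permuting the roles of rows, columns and symbols of $M$ moves the concurrent subarrangement to any prescribed position --- so items (1), (2) and uniqueness in (3) follow from knowing that Ceva and Hesse are such nets and exhibiting a single $a=1$ example; the realizability equations only need to be solved to produce that example and the family in (3). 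Finally, a factual slip worth flagging: the interior (non-mixed) triple point of the case (3) arrangement is not $[0:0:1]=L_1\cap L_4\cap L_7$, which is one of the nine base points of the pencil; it is the concurrency point $L_4\cap L_5\cap L_6=[0:1:-b]$ of the subarrangement $\{L_4,L_5,L_6\}$.
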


\begin{proof}
The proof is based on the description of the realization space for  $(3,3)$-nets from  \cite{Urzua} (see also \cite{St}).  While this realization result takes into account just the pattern of the mixed triple points, we also look at the  multiple points inside each subarrangement $\A_i$.
Unless otherwise stated, from now on isomorphism
between two
line arrangements means lattice isomorphism.
 
If $\A$ has only triple points, since $(3,3)$-nets are defined by a single class of Latin squares, we have only one isomorphism class of arrangements, represented by the Ceva arrangement.

Likewise, if $\A$ has only double points apart from the mixed ones, then again we have only one isomorphism class of arrangements, represented by the Hesse  arrangement described in the theorem.

In \cite[Sect 4]{Urzua} Urz\'{u}a gives a set of equations for the lines of a realizable  $(3,3)$-net, with coefficients in a  two parameter space: $L_1=(y)$, $L_2=(\frac{1}{c}x+y+z)$, $L_3=(\frac{b}{b-1}x+z)$, $L_4=(x)$, $L_5=(x+cy+z)$, $L_6=(by+z)$, $L_7=(x+c(1-b)y)$, $L_8=(x+y+z)$, $L_9=(z)$ with  $b, c \in \C \setminus \{a\; finite\;number \;of\;elemements\}$ (including, for instance, $b, c \neq 0, 1$). A direct linear algebra computation using these equations shows, on one hand, that it is impossible to have arrangements $\A$ with $11$ triple points and, on the other hand, that $(3,3)$-nets with $10$ triple points do exist. 

It is not difficult to see that all $(3,3)$-nets with $10$ triple points are isomorphic, by taking into account the fact that if we permute the labels $\{1,2,3\}$ of the subarrangements the Latin square M from  \ref{square3} remains unchanged. 
\end{proof}

\begin{example} 
\label{new}
An example of $(3,3)$-net with $10$ triple points is the Pappus arrangement (the configuration described by the Pappus hexagon theorem, as in \cite[Fig. 6 (b)]{ACTY}). Equations for the lines of this arrangement are given in \cite[Table 5]{ACTY}. 

Note that the arrangement that realizes the classical configuration $(9_3)_1$ from \cite[Example 10.9]{Su} (having $9$ triple points), to which the author refers also as Pappus arrangement,  is a different arrangement (the above Pappus arrangement
can be seen as a degeneration of a family of $(9_3)_1$ arrangements, in which three double points collapse to yield a triple point).
\end{example}

\begin{remark}
\label{isotopy}
A sharper form of Theorem \ref{3nets} can be obtained using the classification of the realization spaces of  line arrangements of up to  $9$ lines in  \cite{NY} (see also \cite[Thm 3.9]{Ye}), since any two arrangements contained in the same connected component of the realization space are lattice isotopic.  
Recall that two arrangements are called \textit{lattice isotopic} if they are connected by a one-parameter family of arrangements with
constant intersection lattice.

The Ceva arrangement contains as a subarrangement a MacLane arrangement (described in \cite[Example 10.7]{Su}), hence any two realizations of Ceva are either in the same connected component of the realization space (hence lattice isotopic) or one of them is in the same connected component of the realization space as the conjugate of the other. 

$(3,3)$-nets  with precisely $9$ triple points  are actually lattice isotopic  (see for instance \cite[Prop. 3.7]{Ye} and \cite[Prop. 4.6]{ACTY}), since only one of the three possible combinatorial types of arrangements, described in \cite[Thm2.2.1]{Gr} (see also \cite[Fig. 5]{ACTY}), corresponds to a net, and its realization space is connected.

One can also see that $(3,3)$-nets with $10$  triple points are lattice isomorphic using \cite[Prop. 4.8]{ACTY}, since only one of the two combinatorial types given there can be a net. It follows that a $(3,3)$-net with 10 triple points is combinatorially equivalent to the Pappus arrangement. Hence $(3,3)$-nets  with $10$ triple points are  lattice isotopic, by  \cite[Thm 3.15]{NY}).
\end{remark}

Consider now the case of (3,4)-nets. Then there are two main classes of  Latin squares (see \cite{K}), with representatives:

\begin{equation}
\label{squares}
 M_1:= \begin{array}{cccc}
1 & 2 & 3 &4  \\
2 & 3 & 4 & 1\\
3 & 4 & 1 & 2\\
4 & 1 & 2 & 3 \end{array} \;\;\;\;\;\;\; M_2:=\begin{array}{cccc}
1 & 2 & 3 &4  \\
2 & 1 & 4 & 3\\
3 & 4 & 1 & 2\\
4 & 3 & 2 & 1 \end{array} 
\end{equation}

 For each one of them, we have a realization space (as arrangements of hyperplanes in $ \mathbb P^2 \C$)  of strictly positive dimension (\cite{Urzua}, \cite{St}). We give a classification of the $(3,4)$-nets having
only double and triple points. Since the Latin  squares give the intersection pattern of mixed triple points, it remains to test the existence of arrangements with only double and triple points inside each of the subarangements $\A_i$.

It is easy to observe that, when $\A$ is a reduced pencil,  there are two possible configurations  for the lines in the subarrangements $\A_j$: we say that $\A_j$ is of type $(i)$ if $\A_j$ has a triple point, and of type $(ii)$ otherwise.

\begin{theorem}
\label{3,4-nets}
Let $\mathcal{A}=\mathcal{A}_1\cup \mathcal{A}_2\cup \mathcal{A}_3$ be an arrangement of $12$ lines in $\mathbb{P}^2 \C$ defined by a $(3,4)$-net via a Latin square $M_j, \; j\in \{1,2\},$ and having only
double
and triple points. Then, for each $j$, exactly one of the following situations holds:
\begin{enumerate}
\item All the subarrangements $\A_i, \;i \in \{ 1,2,3\},$ are of type (ii), $\A$ having $16$ triple points.
\item All the subarrangements $\A_i, \;i \in \{ 1,2,3\},$ are of type (i), $\A$ having $19$ triple points.
\item Two of the subarrangements $\A_i$ are of type  (ii) and the third one is of type  (i), $\A$ having $17$ triple points.
\end{enumerate}
Moreover, the number of triple points classifies the arrangements up to isomorphism, that is, each of the above situations is represented by a unique lattice  isomorphism class of the arrangement $\A$.
\end{theorem}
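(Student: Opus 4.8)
The plan is to treat the two Latin squares $M_1$ and $M_2$ of \eqref{squares} in parallel, following the strategy of the proof of Theorem \ref{3nets}: begin from the explicit parametrized equations for the twelve lines of a $(3,4)$-net realizing $M_j$ furnished by Urz\'ua \cite{Urzua} (and Stipins \cite{St}), and then detect the extra concurrences inside the subarrangements $\A_i$ by a direct linear algebra computation. First I would record, for each $j$ and modulo $\mathrm{PGL}_3(\C)$, the twelve linear forms as functions on the (positive dimensional) realization space $\RR_j$. The counting is then organized by the net axioms: every intersection of two lines lying in different classes is a base point incident to exactly one line of each class, so the $48$ cross-class line pairs all meet at the $16$ mixed triple points of Definition \ref{mixt}, and no line of a third class can pass through the meeting point of two lines of the same class (otherwise that point would be a base point carrying two lines of one class). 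Hence within-class and cross-class singular points are disjoint, and the only singularities not forced by $M_j$ are the within-class ones. Since four lines cannot carry two triple points (two such points would force two of the lines to coincide), nor, under the standing hypothesis of only double and triple points, a quadruple point, each $\A_i$ is of type (i) or type (ii), and the combinatorics of $\A$ is determined by $M_j$ together with the number $n \in \{0,1,2,3\}$ of type-(i) classes, the triple point count being $16+n$.

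The core of the argument is to show that $n=2$ cannot occur, which removes the value $18$ and leaves exactly the three counts $16$, $17$, $19$ of the statement. For each class $\A_i=\{L^i_1,\dots,L^i_4\}$ and each triple $S\subset\{1,2,3,4\}$, concurrence of the corresponding three lines is the vanishing of a $3\times 3$ determinant $D_{i,S}$, a polynomial on $\RR_j$; the class $\A_i$ is of type (i) exactly when one of its four determinants vanishes. I would compute these twelve polynomials and establish the \emph{coupling} statement: as soon as two of the three classes acquire a triple point, the third one does as well. The efficient route is to use the symmetries of $M_j$ that permute the roles of the three classes --- the cyclic symmetry of $M_1$ and the $(\Z/2)^2$ symmetry of $M_2$ --- which permute the $D_{i,S}$ and cut down the number of determinant pairs to be inspected; one then exhibits the algebraic relation on $\RR_j$ forcing $\{D_{1,S_1}=0\}\cap\{D_{2,S_2}=0\}$ to lie in $\bigcup_{S_3}\{D_{3,S_3}=0\}$ (after discarding the strata where a fourth line becomes concurrent and the hypotheses fail). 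I expect this coupling identity to be the \emph{main obstacle}: there is no purely combinatorial reason for a concurrence in two classes to propagate to the third, so it must be extracted from the specific realization equations, where the pencil relation $Q_3=Q_1+Q_2$ enters. In parallel I would exhibit explicit parameter values realizing $n=0$, $n=1$ and $n=3$, so that all three listed cases genuinely occur.

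It remains to see that, for a fixed $j$, each surviving case is a single lattice isomorphism class, so that the triple point number --- taking the three distinct values $16<17<19$ --- is a complete invariant. Because the entire incidence structure of $\A$ is read off from $M_j$ together with the type pattern (all cross-class incidences forming the fixed configuration of $16$ mixed triple points, every type-(ii) class contributing six distinct nodes, and every type-(i) class contributing the unique combinatorial type of four lines through one triple point), the intersection lattice is constant along the stratum of $\RR_j$ attached to a given value of $n$. The one point needing care is that the choice of which triple is concurrent inside a type-(i) class, and its interaction with the Latin square labelling, does not yield inequivalent lattices; this holds because permuting the four lines of a class is a relabelling within the main class of $M_j$, hence a lattice isomorphism, so all type-(i) realizations of a class are lattice isomorphic. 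For $n=1$ the symmetries of the main class that permute the three classes identify the realizations in which the special class is $\A_1$, $\A_2$ or $\A_3$, and for $n=3$ the same residual symmetry identifies the three within-class triple points; in each case this confirms the single class. Since $16$, $17$, $19$ are distinct, the triple point count separates the three cases and the classification follows.
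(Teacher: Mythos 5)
Your overall strategy coincides with the paper's: start from Urz\'ua's parametrized realizations of $(3,4)$-nets for $M_1$ and $M_2$, reduce the classification to the number $n\in\{0,1,2,3\}$ of type-(i) classes (your observation that each class carries at most one triple point, so the count is $16+n$, is correct and is exactly the paper's setup), rule out $n=2$ by a computation on the realization space, exhibit examples for $n=0,1,3$, and finish with relabelling symmetries of the Latin squares. On the computational side you are on the same footing as the paper, which also only invokes a ``partly MAPLE assisted'' computation for the coupling statement (triple points in two classes force one in the third); your proposal leaves this as a plan rather than a completed argument, but so, in effect, does the printed proof.

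The genuine gap is in your uniqueness argument. You dispose of the dependence on \emph{which} triple is concurrent inside a type-(i) class by asserting that ``permuting the four lines of a class is a relabelling within the main class of $M_j$, hence a lattice isomorphism.'' This is not valid as stated. A permutation $\pi$ of the lines of $\A_1$ alone transforms the labelled incidence pattern of mixed triple points from $M_j$ into the row-permuted square $\pi\cdot M_j$; this is main-class equivalent to $M_j$ but it is a \emph{different} labelled pattern, so the resulting bijection of lines does not carry the concurrences of one arrangement to those of the other (no nontrivial row permutation can fix all mixed incidences, since distinct rows of a Latin square differ). What is actually needed is a simultaneous relabelling of two or three classes that preserves $M_j$ \emph{exactly} --- an autotopism, or a paratopism when the distinguished class itself must be moved --- and that at the same time maps the given within-class triple to the target one. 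The existence of such symmetries with the required action is a specific property of $M_1$ and $M_2$, not a formal consequence of main-class equivalence, and supplying them is precisely the content of the paper's proof: for instance the relabelling $L_i\leftrightarrow L_{\sigma(i)}$, $L_{i+8}\leftrightarrow L_{\sigma(i)+8}$ with $\sigma=(1234)$, applied simultaneously to $\A_1$ and $\A_3$, fixes $M_1$ and moves the triple $(L_1,L_2,L_4)$ to $(L_1,L_2,L_3)$; and the relabelling $L_i\leftrightarrow L_{i+8}$ ($i=1,\dots,4$), $L_6\leftrightarrow L_8$ moves a triple point from $\A_3$ to $\A_1$ while restoring the representative $M_1$. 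A related omission occurs in your case $n=3$: there a configuration is a \emph{pair} of choices (one triple in $\A_1$, one in $\A_2$, the one in $\A_3$ then being forced), giving $16$ a priori distinct lattices, and your phrase ``the same residual symmetry identifies the three within-class triple points'' does not engage with how these $16$ are identified; the paper does this with the $\sigma$-power relabellings above together with the row-column symmetry of $M_1$. To repair your proof, replace the main-class slogan by the explicit autotopism/paratopism verifications for $M_1$ and $M_2$.
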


\begin{proof}
A straightforward but lengthy computation, (even taking into account the symmetries)  partly done by MAPLE, using the description of the realisability space of $(3,4)$-nets given in \cite{Urzua},
shows both the existence of examples for each item in the list and disproves the existence of an arrangement $\A$ that has a triple point in two out of three subarrangements $\A_i, \; i \in \{1,2,3\}$. Obviously the arrangements described in $(1)$ are all lattice isomorphic.

Let us discuss now the second case. Assume $\A$ has a triple point in each of the subarrangements $\A_i, \in \{1,2,3\}$. In fact, a (partly MAPLE assisted) computation using the equations (with coefficients having three degrees of freedom) of the lines of a realizable $(3,4)$-net given in \cite[Sect.4]{Urzua} shows that, assuming there are triple points in two of the subarrangements $\A_i$, there must be a triple point in the third subarrangement. 
Consider first that the arrangement $\A$ is associated to the Latin square $M_1$.

Using the equations of the lines given in \cite{Urzua}, we have identified 16 possible configurations for the triple points of $\A$ (only the placement of non-mixed triple points may vary). 
For instance, if the triple point inside $\A_1$ is given by the intersection of the lines $(L_1, L_2, L_3)$, then 4 configurations are obtained by varying the lines  passing through the triple point in $\A_2$, as follows. If the triple point in $\A_2$ is the intersection of $(L_5, L_6, L_7)$, then one gets in $\A_3$ the triple point $(L_{10}, L_{11}, L_{12})$; if the triple point in $\A_2$ is the intersection of $(L_5, L_6, L_8)$, then one gets in $\A_3$ the triple point $(L_{9}, L_{10}, L_{11})$; if the triple point in $\A_2$ is the intersection of $(L_5, L_7, L_8)$, then one gets in $\A_3$ the triple point $(L_{9}, L_{10}, L_{12})$; respectively, if the triple point in $\A_2$ is the intersection of $(L_6, L_7, L_8)$, then one gets in $\A_3$ the triple point $(L_{9}, L_{11}, L_{12})$. The 12 remaining configurations appear when we vary the lines through the triple point in $\A_1$. We show that all 16 arrangements (configurations) $\A$ are lattice isomorphic. The first 4 arra
 ngements can be obtained one from another by re-labelling the lines inside the subarrangements $\A_2$ and $\A_3$ by a permutation by some power $\sigma^i, 1 \leq i \leq 3$, of the cycle $\sigma:=(1234)$, the same power for both $\A_2$ and $\A_3$. 

The same property holds for the other three sets of 4 configurations. So we are left, up to lattice isomorphism, to 4 configurations.  Since $M_1$ is symmetric in $\A_1, \; \A_2$ (i.e. replacing the lines by columns in the same order leaves the square unchanged), it follows easily that all configurations are isomorphic (just define lattice isomorphisms that permute the subarrangements $\A_1$ and $\A_2$). 

The $M_2$ case is treated similarly.

When $(3)$ happens, we notice that, as lattice isomorphism type, there is no distinction between an arrangement $\A$ with the non-mixed triple point in the arrangement $\A_i$ and an arrangement $\A$ with the non-mixed triple point in the arrangement $\A_j, j \neq i$. The proof of this claim uses as before the symmetries  that appear
 in the Latin square $M_i, \; i=1,2$, and an appropriate re-labelling  of the lines.  First we show that we may assume without losing the generality that the triple point is in $\A_1$.
Notice that both Latin squares $M_1$ and $M_2$ are symmetric in $\A_1$ and $A_2$. Moreover, the second Latin square is symmetric with respect to all the sub-arrangements  $\A_i$, so in this case one can assume that the triple point is, for instance, in $\A_1$.
In the $M_1$ case, we still need to find a lattice isomorphism between an arbitrary  arrangement $\A'=\A'_1 \cup \A'_2  \cup \A'_3$ with the triple point in $\A'_3$ and an arrangement with the triple point in $\A_1$. 

It is enough to re-label some of the lines in $\A'$ as follows: $L_i 	\leftrightarrow L_{i+8}$ for $i= 1, \dots, 4$ and $L_6 \leftrightarrow	 L_8$, to obtain an arrangement with a triple point in $\A_1$ and the pattern of mixed triple intersection points described by $M_1$ (that is, the representative of the class as presented in \ref{squares}, not some other Latin square in its main class). 
 
 It remains to be seen that any two arrangements with a  non-mixed triple point in $\A_1$ are isomorphic.
 This triple point may appear as the intersection of $(L_1, L_2, L_3)$, $(L_1, L_2, L_4), \; (L_1, L_3, L_4)$ or respectively $(L_2, L_3, L_4)$. So there are a priori $4$ types of arrangements $\A$ to consider. We show that all are lattice isomorphic to an arrangement where the triple point is given by the intersection of $(L_1, L_2, L_3)$. 
 
 Assume $\A$ has the mixed triple points given by $M_1$.
 For instance, the arrangement with the triple point $(L_1, L_2, L_4)$, via a re-label of its lines in $\A_1$ and $\A_3$ by the rule $L_i 	\leftrightarrow L_{\sigma(i)}, \; L_{i+8}\leftrightarrow L_{\sigma(i+8)}, i=1, \dots, 4$, gives an arrangement with the triple point $(L_1, L_2, L_3)$ and the pattern of mixed triple points unchanged.  Similarly, if the arrangement has the triple point $(L_1, L_3, L_4)$, (respectively $(L_2, L_3, L_4)$), the lines in $\A_1$ and $\A_3$ may be re-labelled  by the rule $L_i \leftrightarrow L_{\sigma^2(i)}, \; L_{i+8} \leftrightarrow	 L_{\sigma^2(i+8)}, i=1, \dots, 4$, (respectively $L_i \leftrightarrow L_{\sigma^3(i)}, \; L_{i+8} \leftrightarrow L_{\sigma^3(i+8)}, i=1, \dots, 4$), to give arrangements with the triple point $(L_1, L_2, L_3)$ and the pattern of mixed triple points given by the Latin square $M_1$.
 
If $\A$ has the mixed triple points given by $M_2$, a similar argument applies.

\end{proof}
 
 We give a series of examples of arrangements realizable over $\Q$ illustrating all the situations from the above theorem.

 \begin{example}
 \label{3,4}
 If $\A$ is associated to a Latin square  of type $M_1$:
 \begin{enumerate}
\item  The arrangement $L_1=(y)$, $L_2=(10x+y+z)$, $L_3=(10x+76y+43z)$, $L_4=(5x+z)$, $L_5=(x)$, $L_6=(40x+19y+40z)$, $L_7=(175x+10y+43z)$, $L_8=(2y+z)$, $L_9=(10x-y)$, $L_{10}=(x+y+z)$, $L_{11}=(175x+76y+43z)$, $L_{12}=(z)$ has no other triple point besides the $16$ mixed triple points. All intersection points in each subarrangement $\mathcal{A}_i$ are double. Thus, the arrangement has $6\cdot 3=18$ double points.

\item The arrangement $L_1=(y)$, $L_2=(-x+y+z)$, $L_3=(2x+4y+3z)$, $L_4=(-x+z)$, $L_5=(x)$, $L_6=(x+2y+z)$, $L_7=(-x+y+3z)$, $L_8=(2y+z)$, $L_9=(-x-y)$, $L_{10}=(x+y+z)$, $L_{11}=(-x+4y+z)$, $L_{12}=(z)$ has three triple points (besides the $16$ mixed triple points), one in each subarrangement $\mathcal{A}_i$: $L_1\cap L_2\cap L_4$, $L_5\cap L_6\cap L_8$, $L_9\cap L_{10}\cap L_{12}$. Thus, in this arrangement there are $9$ double points, $3$ double points in each subarrangement $\mathcal{A}_i$. 
 
 \item The arrangement $L_1=(y)$, $L_2=(-2x+y+z)$, $L_3=(-2x+4y+z)$, $L_4=(5x+z)$, $L_5=(x)$, $L_6=(8x-25y+8z)$, $L_7=(13x-2y+z)$, $L_8=(2y+z)$, $L_9=(10x+5y)$, $L_{10}=(x+y+z)$, $L_{11}=(13x+4y+z)$, $L_{12}=(z)$ has one triple point (besides the $16$ mixed triple points) in the subarrangement $\mathcal{A}_2$: $L_5\cap L_7\cap L_8$. All the other intersection points are double. Hence, there are $15$ double points. 
 
 \end{enumerate}
 
 If $\A$ is associated to a Latin square of type $M_2$:
 \begin{enumerate}
 
  \item The arrangement $L_1=(y)$, $L_2=(2x+y+z)$, $L_3=(12x+15y+13z)$, $L_4=(12x+z)$, $L_5=(x)$, $L_6=(4x+5y+4z)$, $L_7=(24x+12y+13z)$, $L_8=(3y+z)$, $L_9=(12x-3y)$, $L_{10}=(x+y+z)$, $L_{11}=(24x+15y+13z)$, $L_{12}=(z)$ has only double points ($3$ in each subarrangement $\mathcal{A}_i$) besides the $16$ mixed triple points. 
 
 \item The arrangement $L_1=(y)$, $L_2=(x+5y+5z)$, $L_3=(-2x-8y+z)$, $L_4=(2x+5z)$, $L_5=(x)$, $L_6=(x+4y+z)$, $L_7=(4x+10y-5z)$, $L_8=(-2y+z)$, $L_9=(2x+10y)$, $L_{10}=(x+y+z)$, $L_{11}=(4x+40y-5z)$, $L_{12}=(z)$ has three triple points besides the $16$ mixed triple points, one in each subarrangement $\mathcal{A}_i$: $L_1\cap L_2\cap L_4$, $L_5\cap L_7\cap L_8$, respectively $L_9\cap L_{10}\cap L_{11}$  and $9$ double points, $3$ double points in each subarrangement $\mathcal{A}_i$. 

\item The arrangement $L_1=(y)$, $L_2=(3x+y+z)$, $L_3=(3x-y+5z)$, $L_4=(3x+z)$, $L_5=(x)$, $L_6=(3x-y+3z)$, $L_7=(9x+3y+5z)$, $L_8=(-2y+z)$, $L_9=(3x+2y)$, $L_{10}=(x+y+z)$, $L_{11}=(9x-y+5z)$, $L_{12}=(z)$ has only one triple point $L_1\cap L_2\cap L_4$ besides the $16$ mixed triple points and $15$ double points ($3$ double points in the subarrangement $\mathcal{A}_1$ and $6$ double points in each of the subarrangements $\mathcal{A}_2$ and $\mathcal{A}_3$). 
 
 \end{enumerate}
 \end{example}

 \begin{remark}
 \label{3,5}
Although one can easily produce examples of  $(3,5)$-nets that have at most triple points, a complete classification by the lattice isomorphism type is work in progress by the authors.
\end{remark}

 \section{Nets and monodromy}
 \label{sect2}
 
When necessary, we may look at  $\A$ as an essential central arrangement in $\C^3$. This does not affect the definition of the associated Milnor fiber, which is our object of focus for this section.
In this context, we make a brief inventory of some useful definitions and results.

Let  $A_{\k}^*(\A)$ be the Orlik-Solomon algebra with coefficients over the field $\k$ of the arrangement $\A$. By definition, $A_{\k}^1(\A)$ is freely generated by $\{ a_H \}_{H\in \A}$. Let $\omega:= \sum_{H} a_H$, and denote
  by $\mu_{\omega}$ the multiplication by $\omega$ in $A_{\k}^*(\A)$. The {\it Aomoto complex} associated to $\omega$ is the cochain complex:

\begin{equation}
\label{eq:aok}
\big( A_{\k}^*(\A), \mu_{\omega} \big)= \{ A_{\k}^*(\A)\stackrel{\mu_{\omega}
}{\longrightarrow} 
A_{\k}^{*+1}(\A)\}_{*\ge 0}\, ,
\end{equation}

If $\k=\F_p$, set
\begin{equation}
\label{eq:modres}
\beta_{qp}(\A):= \dim_{\k} H^q (A_{\k}^*(\A), \mu_{\omega})\, \quad {\rm for}\quad 
q\ge0 \,.
\end{equation}
 the {\em Aomoto-Betti numbers}.
 
 It is well known that
we have 
an equivariant decomposition, consequence of the order $d$ geometric monodromy  of the Milnor fiber:
\begin{equation}
\label{eq:ciclo}
H_1(F_{\A}, \Q)= \bigoplus_{m |d} \big( \frac{\Q[t]}{\Phi_m} \big)^{b_{m}(\A)},
\end{equation}
where $\Phi_m$ is the $m$th cyclotomic polynomial and the exponents $b_{m}(\A)$ depend on $m$ and $\A$; see for instance
\cite{OT, L}.  When $\A$ has only double and triple points, then $b_m(\A) \ne 0$ implies
that either $m=1$ or $m=3$. One has $b_{m}(\A)=0$ for $m>1$ when $m$ does not divide 
$d=|\A|$ or if there are no points in $\A$ of multiplicity a multiple of $m$. In particular $h^1=Id$ when $d$ is a prime number, see \cite{CS}.

The exponents $b_{m}(\A)$ are connected by {\it modular inequalities} to the Aomoto-Betti numbers, via local coefficients cohomology of the complement. 
To state them, let $\T(M)=\Hom (\pi_1(M), \C^*)$ be the affine torus parametrizing the rank one local systems on the hyperplane complement $M$ of $\A$. When $m|d$, with $d=|\A|$, we denote by $\rho (m)={\bf 1}/{m} \in \T(M)$ the rank one local system whose monodromy about each line $L \in \A$  is $\lambda (m)=\exp (2\pi i/m)$.

Recall the following inequality, playing a key role in the proofs below.

\begin{theorem}[\cite{CO, PS}]
\label{thm:cohen}
Assume $M$ is the complement of a central arrangement $\A$ and $\rho={\bf 1}/{p^s}$  a rational equimonodromical local system on $M$ with $p$ prime and $s \geq 1$, and denote $b_1(M, {\bf 1}/{p^s}):=\dim H^1(M, _{\rho}\C)$. Then
\[
b_1(M, {\bf 1}/{p^s}) \le \beta_{1p}(\A).
\]
\end{theorem}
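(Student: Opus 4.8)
The plan is to compute both sides from a single cochain complex of free modules over a discrete valuation ring, and then let a semicontinuity argument produce the inequality. Since $\rho = \mathbf{1}/p^s$ is a torsion character sending each meridian of $M$ to $\zeta := \exp(2\pi i/p^s)$, it factors through the ring $\mathcal{O}_0 = \Z[\zeta]$. The rational prime $p$ is totally ramified in $\mathcal{O}_0$, with $1-\zeta$ generating the unique prime above it and residue field $\F_p$; let $\mathcal{O}$ be the localization of $\mathcal{O}_0$ at this prime, a DVR with uniformizer $\varpi = 1-\zeta$, residue field $\F_p$, and fraction field $\K := \Q(\zeta)\subset\C$. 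First I would take the equivariant cochain complex $(C^\bullet,\partial)$ of the universal abelian cover of $M$, a complex of free modules over the Laurent ring $\Lambda = \Z[\pi_1(M)^{\ab}] = \Z[t_H^{\pm 1}: H\in\A]$; because $M$ is a minimal space, this complex may be taken with $\rank C^i = b_i(M)$ and, crucially, with all boundary maps vanishing at the trivial character, $\partial(1,\dots,1) = 0$.

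Evaluating the differentials along $\Lambda\to\mathcal{O}$, $t_H\mapsto\zeta$, gives a complex $(C^\bullet_\mathcal{O},\partial|_\zeta)$ of free $\mathcal{O}$-modules; extending scalars to $\K$ (and then to $\C$) recovers the twisted complex computing $H^\bullet(M,{}_\rho\C)$, so $\dim_\K H^1 = b_1(M,\mathbf{1}/p^s)$. The key computation is the reduction modulo $\varpi$. Each entry of $\partial|_\zeta$ is a value $f(\zeta,\dots,\zeta)$ of a Laurent polynomial $f$ with $f(1,\dots,1)=0$; Taylor expanding at the trivial character and using $\zeta-1 = -\varpi$ gives $f(\zeta,\dots,\zeta) = -\varpi\sum_H(\partial f/\partial t_H)(1) + O(\varpi^2)$. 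Hence $\partial|_\zeta = \varpi\cdot B$ for a differential $B$ over $\mathcal{O}$ (that $B^2=0$ follows from $\partial^2=0$, as $\mathcal{O}$ is a domain), and the reduction $B\bmod\varpi$ has entries $\sum_H(\partial f/\partial t_H)(1)\bmod p$. Up to an overall sign these are precisely the structure constants of multiplication by $\omega=\sum_H a_H$ on the Orlik--Solomon algebra over $\F_p$: the linearization at the trivial character of the universal abelian cover complex is the Aomoto complex $(A^\bullet_{\F_p}(\A),\mu_\omega)$. Thus $(C^\bullet,B)\otimes_\mathcal{O}\F_p$ is that Aomoto complex, with $\dim_{\F_p}H^1 = \beta_{1p}(\A)$. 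Over $\K$, where $\varpi$ is a unit, rescaling $C^i$ by $\varpi^{-i}$ identifies $(C^\bullet,\varpi B)$ with $(C^\bullet,B)$, so $(C^\bullet,B)\otimes_\mathcal{O}\K$ likewise has first cohomology of dimension $b_1(M,\mathbf{1}/p^s)$.

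Finally I would invoke semicontinuity of cohomology for a bounded complex of finite free modules over the DVR $\mathcal{O}$: writing $H^1(C^\bullet,B)=\mathcal{O}^r\oplus T$ with $T$ torsion, the universal coefficient sequence gives $\dim_\K H^1(\,\cdot\otimes\K)=r\le\dim_{\F_p}H^1(\,\cdot\otimes\F_p)$, the gap being accounted for by torsion of $H^1$ and $H^2$. Combined with the two identifications above, this yields $b_1(M,\mathbf{1}/p^s)\le\beta_{1p}(\A)$. The main obstacle is the linearization step: one must verify that the first-order term of the specialized Fox-calculus differentials is exactly the Orlik--Solomon multiplication $\mu_\omega$ reduced mod $p$. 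This rests on minimality (so that $\partial(1,\dots,1)=0$ and the leading $\varpi$-term is well defined) together with the combinatorial identification of the Aomoto complex with the tangent-cone/linearized Alexander complex; the remaining points — total ramification of $p$ in $\Z[\zeta]$, the rescaling over $\K$, and the semicontinuity estimate — are then formal.
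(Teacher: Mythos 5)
Note first that the paper itself contains no proof of Theorem \ref{thm:cohen}: it is imported verbatim from Cohen--Orlik \cite{CO} and Papadima--Suciu \cite{PS}, so your attempt can only be compared with the arguments in those sources. Measured against them, your proposal is essentially a correct reconstruction of the Papadima--Suciu proof, and its formal skeleton is sound at every step you spell out: the arithmetic of the cyclotomic DVR (total ramification of $p$, uniformizer $\varpi=1-\zeta$, residue field $\F_p$), the divisibility $\partial|_\zeta=\varpi B$ coming from vanishing of the differentials at the trivial character, the rescaling of $(C^\bullet,\varpi B)$ to $(C^\bullet,B)$ over $\K$ by $\varpi^{-i}$ in degree $i$, and the universal-coefficient/semicontinuity estimate $\dim_{\K}H^1(\cdot\otimes\K)\le\dim_{\F_p}H^1(\cdot\otimes\F_p)$ over a DVR. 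What keeps this from being self-contained is exactly what you flag as ``the main obstacle'', and you should be clear that it is not a routine verification: (i) the existence of a free $\Lambda$-complex with $\rank C^i=b_i(M)$ and $\partial({\bf 1})=0$ is the minimality theorem for arrangement complements (Dimca--Papadima, Randell), and (ii) the identification of the reduction $B \bmod \varpi$ with the Aomoto complex \eqref{eq:aok} over $\F_p$ is precisely the linearization theorem of \cite{PS} -- it is the mathematical heart of Theorem \ref{thm:cohen}, not a consequence that ``rests on'' minimality in any formal way. (In \cite{CO} the same role is played instead by their universal complex constructed via stratified Morse theory, which has the linearization property built in by construction.) So your proposal is a faithful outline of the cited proof with its central lemma quoted rather than proved; that is acceptable here, but the linearization step should be stated as a citation to \cite{PS}, not left as a verification for the reader.
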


 \noindent  On the other hand one knows that $b_m(\A)=b_1(M, {\bf 1}/m)$, hence  $b_{m=p^s}(\A)$ has $\beta_{1p}(\A)$ as upper bound.

 \noindent  If $X \in L(\A)$ is an arbitrary element in the intersection lattice of an arbitrary arrangement $\A$, denote by $m_X$ the number of hyperplanes that contain $X$, that is $m_X= \# \{H \in \A|\; X \subseteq H\}$ and let $\A_X$ be the subarrangement of $\A$ consisting of all hyperplanes that contain $X$.
 
The next lemma reduces the computation of $\beta_{1p}(\A)$ to solving a system of linear equations.
 
\begin{lemma} (\cite[Lemma 3.3]{LY})
\label{beta}
Let $\mathcal{A}$ be an arbitrary central arrangement and $p$ a prime. $\eta =\sum_{H\in \A}\eta_H a_H \in A_{\F_p}^{1}
(\mathcal{A})$ is a $1$-cocycle for \eqref{eq:aok} if and only if one has 
\begin{equation}
\label{div}
\sum_{H \supset X} \eta_{H}=0, \;\;if \;\;p \mid m_{X},
\end{equation}
or 
\begin{equation}
\label{ndiv}
\eta_{H}= \eta_{K},\;\;\forall\; H\ne K \;\;\in \mathcal{A}_{X}, \;\;if\; p\;\; \nmid m_{X}\,,
\end{equation}
for every rank $2$ element 
$X \in \mathcal{L}(\mathcal{A})$.
\end{lemma}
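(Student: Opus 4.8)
The plan is to reduce the global cocycle condition $\omega \wedge \eta = 0$ in $A_{\F_p}^{*}(\A)$ to a family of purely local conditions, one for each rank $2$ flat $X \in \LL(\A)$, by using the splitting of the degree $2$ part of the Orlik--Solomon algebra along the intersection lattice.

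First I would recall the structure of $A_{\F_p}^{*}(\A)$. Writing $A_{\F_p}^{*}(\A) = E/I$ with $E$ the exterior algebra on generators $e_H$, $H \in \A$, and $a_H$ the image of $e_H$, the degree $2$ part of the relation ideal $I$ is spanned by the boundaries $\partial(e_H e_K e_L) = a_K a_L - a_H a_L + a_H a_K$ taken over triples $H, K, L$ lying on a common rank $2$ flat. Since every such generator, as well as every product $a_H a_K$ with $H \neq K$, belongs to the pencil determined by the single flat $X = H \cap K$, both $E^2$ and its relation subspace split as direct sums indexed by the rank $2$ flats. This yields the Brieskorn-type decomposition
\[
A_{\F_p}^{2}(\A) = \bigoplus_{\substack{X \in \LL(\A)\\ \rank X = 2}} A_{\F_p}^{2}(\A_X),
\]
where the summand $A_{\F_p}^{2}(\A_X)$ is spanned by the products $a_H a_K$ with $H, K \supset X$ and has dimension $m_X - 1$.

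Next I would project the cocycle onto these summands. After fixing an ordering, $\omega \wedge \eta = \sum_{\{H,K\}}(\eta_K - \eta_H) a_H a_K$; grouping the pairs by their intersection flat shows that the component of $\omega \wedge \eta$ in $A_{\F_p}^{2}(\A_X)$ equals $\omega_X \wedge \eta_X$, where $\omega_X = \sum_{H \supset X} a_H$ and $\eta_X = \sum_{H \supset X}\eta_H a_H$ are the restrictions to the pencil $\A_X$. Hence $\eta$ is a $1$-cocycle if and only if $\omega_X \wedge \eta_X = 0$ in $A_{\F_p}^{2}(\A_X)$ for every rank $2$ flat $X$, and the problem becomes entirely local on the pencil $\A_X = \{H_1, \dots, H_{m_X}\}$.

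The core computation is then local. Using the no-broken-circuit basis $\{a_{H_1} a_{H_k}\}_{k=2}^{m_X}$ of $A_{\F_p}^{2}(\A_X)$ together with the relations $a_{H_i} a_{H_j} = a_{H_1} a_{H_j} - a_{H_1} a_{H_i}$ for $2 \le i < j$, I would expand $\omega_X \wedge \eta_X$ and collect coefficients; the coefficient of $a_{H_1} a_{H_k}$ simplifies to $m_X \eta_{H_k} - S_X$, where $S_X = \sum_{H \supset X}\eta_H$. Thus $\omega_X \wedge \eta_X = 0$ is equivalent to the linear system $m_X \eta_{H_k} = S_X$ for $k = 2, \dots, m_X$. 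Finally I would split into cases over $\F_p$: if $p \mid m_X$ the system collapses to $S_X = 0$, which is \eqref{div}; if $p \nmid m_X$ then $m_X$ is invertible, forcing $\eta_{H_2} = \cdots = \eta_{H_{m_X}} = m_X^{-1} S_X$, and substituting back into $S_X = \sum_{H \supset X}\eta_H$ forces $\eta_{H_1}$ to this same value, which gives the common-value condition \eqref{ndiv}. The main obstacle is essentially bookkeeping: justifying that the degree $2$ relation subspace is block-diagonal with respect to the flats, so that the decomposition and the projection are legitimate, and carrying out the coefficient count carefully enough to recognize the clean form $m_X \eta_{H_k} - S_X$; after that the case distinction is immediate.
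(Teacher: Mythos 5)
Your proposal is correct, but there is no proof in the paper to compare it against: the authors quote the lemma verbatim from \cite[Lemma 3.3]{LY} and use it as a black box in Proposition \ref{libg} and Theorem \ref{withcvaduple}. What you have written is a complete, self-contained proof, and it is the standard argument for this statement. The two load-bearing points both check out. First, the block-diagonality of the degree-two relations: in a central arrangement no pair of distinct hyperplanes is dependent, so the degree-two part of the Orlik--Solomon ideal is spanned by the elements $\partial(e_He_Ke_L)$ with $H,K,L$ containing a common rank-two flat, and each such element involves only pairs whose intersection is that same flat; hence $A^2_{\F_p}(\A)$ splits as $\bigoplus_X A^2_{\F_p}(\A_X)$ over the rank-two flats, and the component of $\omega\wedge\eta$ at $X$ is $\omega_X\wedge\eta_X$. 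Second, the local computation: writing $\A_X=\{H_1,\dots,H_{m_X}\}$ and expanding in the basis $\{a_{H_1}a_{H_k}\}_{2\le k\le m_X}$, the coefficient of $a_{H_1}a_{H_k}$ in $\omega_X\wedge\eta_X$ is $\sum_{l\neq k}(\eta_{H_k}-\eta_{H_l})=m_X\eta_{H_k}-S_X$, so vanishing is equivalent to $m_X\eta_{H_k}=S_X$ for all $k\ge 2$; when $p\mid m_X$ this collapses to exactly \eqref{div}, and when $p\nmid m_X$ it forces $\eta_{H_k}=m_X^{-1}S_X$ for $k\ge 2$ and then $\eta_{H_1}=S_X-(m_X-1)m_X^{-1}S_X=m_X^{-1}S_X$ as well, which is \eqref{ndiv}; both implications reverse trivially, so the equivalence is complete in both directions. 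In short, your argument fills in a proof that the paper delegates to the literature, by the same local-to-global mechanism that the cited source relies on.
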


By the above Lemma \ref{beta}, the computation of $\beta_{1p}(\A)$ resumes to solving a system $\bf S$ with $\F_p$ coefficients of linear equations, with variables labelled by the lines of $\A$. The equations are in one to one correspondence to the multiple points of $\A$.
A solution for $\bf S$ is  precisely the set of coefficients in $\F_p$ of an arbitrary $1$-cocycle $\eta$ of the complex \eqref{eq:aok}. Hence Lemma \ref{beta} helps us compute the dimension of the space of $1$-cocycles of the complex \eqref{eq:aok} (that is, the dimension of the space of solutions of $\bf S$). It is easy to see that the dimension of the space of $1$-coboundaries of the same complex \eqref{eq:aok} is $1$.

 We call a solution $(a_H)_{H \in \A}$ of $\bf S$ {\it constant} if there exists $a \in \F_p$ such that $a_H=a$, for all $H\in \A$, and {\it non-constant} otherwise.  We will call $a_H \in \F_p$ {\it the weight} associated to the line $H$.

\begin{prop} \label{libg}
Let $\A \subset \mathbb{P}^2\C$ be an arrangement with $|\mathcal{A}| \leq 9$ such that $\A$ has only double and triple points and the monodromy operator $h^1:H^1(F)\rightarrow H^1(F)$ is not trivial. Then the arrangement $\mathcal{A}$ is composed of a reduced pencil. 
\end{prop}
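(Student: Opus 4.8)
The plan is to convert the analytic nontriviality of $h^1$ into the purely combinatorial statement that $\A$ carries a nonconstant $\F_3$-weighting, and then to read the net off those weights. Since $\A$ has only double and triple points, the decomposition \eqref{eq:ciclo} together with the stated restriction on eigenvalues shows that $h^1\neq\mathrm{Id}$ forces a nontrivial $\Phi_3$-factor, i.e. $b_3(\A)>0$. Combining the identity $b_3(\A)=b_1(M,\mathbf{1}/3)$ with Theorem \ref{thm:cohen} applied to $p=3$, $s=1$, one gets $0<b_3(\A)\le\beta_{13}(\A)$, so $\beta_{13}(\A)\neq 0$. As the space of $1$-coboundaries of \eqref{eq:aok} is one-dimensional and consists exactly of the constant cocycles, Lemma \ref{beta} then yields a \emph{nonconstant} solution $(a_H)_{H\in\A}$, with $a_H\in\F_3$, of the system $\mathbf{S}$.

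Next I would extract the net from the weights $a_H$. The equations of $\mathbf{S}$ say that the two lines through a double point carry equal weights, while the three lines through a triple point have weights summing to $0$ in $\F_3$; the latter forces those three weights to be \emph{either all equal or all distinct}, since a triple $a,a,c$ with $2a+c\equiv 0$ gives $c\equiv a\pmod 3$. Partition $\A$ into the classes $\A_j=\{H: a_H=j\}$, $j\in\F_3$. The key point is that two lines of \emph{different} weight cannot meet at a double point (that would equate their weights), so they meet at a triple point whose three weights are then forced to be all distinct; in particular the third weight is realized, so all three classes are nonempty and every such intersection is a mixed triple point carrying exactly one line from each class. This is precisely the incidence pattern of a $3$-net.

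It then remains to balance the classes and produce the pencil. Fixing $H\in\A_1$, no two lines of $\A_2$ can meet $H$ at a common point (it would carry weights $1,2,2$, which is excluded), and each such meeting lies on a unique line of $\A_3$; sending each $\A_2$-line to that $\A_3$-line is injective, and symmetrically, whence $|\A_1|=|\A_2|=|\A_3|=m:=d/3$ and all $m^2$ mixed triple points occur, one for each pair in $\A_1\times\A_2$. Setting $Q_i=\prod_{H\in\A_i}L_H$, the curves $\{Q_1=0\}$ and $\{Q_2=0\}$ meet transversally in exactly these $m^2$ points, hence form a transverse complete intersection; the degree-$m$ forms vanishing on this base locus constitute precisely the pencil $\langle Q_1,Q_2\rangle$. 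Since $Q_3$ vanishes at every mixed triple point, it lies in this pencil, and after rescaling $Q_1,Q_2$ one obtains $Q_3=Q_1+Q_2$, exhibiting $\A$ as a reduced pencil.

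I expect the combinatorial rigidification in the middle steps to be the crux: it is the mod-$3$ dichotomy at triple points that pins the weights into three genuinely balanced classes and rules out a spurious pattern. The final passage from a combinatorial $3$-net to an honest pencil rests on the complete-intersection (Cayley--Bacharach) property, namely that the $m^2$ base points impose only $\binom{m+2}{2}-2$ conditions on degree-$m$ curves, so that $Q_3$ is forced into $\langle Q_1,Q_2\rangle$; one must check there that no intersection of $Q_1$ and $Q_2$ lies off the mixed triple points. The hypothesis $|\A|\le 9$ (so $m\le 3$) keeps every step explicit and is exactly what makes this serve as a warm-up for the more delicate Theorem \ref{withcvaduple}.
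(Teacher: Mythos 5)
Your proof is correct, but it takes a genuinely different route from the paper's. The two arguments share only the first step: $h^1\neq\mathrm{Id}$ together with the restriction to double and triple points gives $b_3(\A)>0$, then Theorem \ref{thm:cohen} gives $\beta_{13}(\A)\geq 1$, and Lemma \ref{beta} produces a nonconstant solution of $\mathbf{S}$ over $\F_3$. From there the paper proceeds by cases on the double points: when $\A$ has none (hence $12$ triple points) it reconstructs the lattice line by line and identifies it with the Ceva arrangement; when $\A$ has a double point it fixes three of them, splits into the collinear and non-collinear subcases, and runs a synthetic, step-by-step construction of the remaining lines that leans heavily on $|\A|=9$. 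You instead argue uniformly: the level sets of the weights are taken as the putative net classes, the $\F_3$ dichotomy at triple points (weights all equal or all distinct, since $a,a,c$ summing to zero forces $c=a$) shows every intersection of lines of different weights is a mixed triple point with one line from each class, and the injectivity argument $K\mapsto T(K)$ balances the class sizes. This is cleaner, requires no case analysis, and --- notably --- nowhere uses $|\A|\leq 9$: as written it proves Libgober's theorem in full generality for arrangements with only double and triple points, strictly more than the proposition claims; this level-set mechanism is precisely the ``modular resonance to nets'' technique. You also make explicit the passage from the combinatorial net to an actual pencil, via transversality of $Q_1,Q_2$ at the $m^2$ base points and Noether's AF+BG (Cayley--Bacharach), a step the paper leaves implicit because its terminology (see the Introduction) treats ``$(3,m)$-net'' and ``reduced pencil-type'' as interchangeable. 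What the paper's longer route buys is finer, lattice-level information: it exhibits the no-double-point case as Ceva and meshes with the classification of Theorem \ref{3nets}, whereas your argument yields the net structure but not which net.
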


\begin{proof} 
The above discussion shows that $h^1 \ne Id$ implies that $d=|\A|$ is
divisible by 3, i.e. $d=3$, or $d=6$, or $d=9$.
We give the details only for the case $d=9$ since the other two cases are
very simple and similar.

We make a discussion on the number of double points of the arrangement. 

Assume $\A=\{L_1, \dots, L_9\}$ has no double points (hence $\A$ has $12$ triple points). The rigidity of this configuration will lead us to the conclusion that $\A$ is isomorphic to the Ceva arrangement.  Since all the hyperplanes (lines) intersect each other, then each line contains $4$ triple points. One may assume that $L_1 \cap L_2 \cap L_3, \;L_1 \cap L_4 \cap L_7, L_1 \cap L_5 \cap L_8, \;L_1 \cap L_6 \cap L_9$ are the triple points on $L_1$. 

Let us look now at the triple points on $L_2$.  $L_2 \cap L_7$ must be a triple point, and the third line that contains this point may be one of the following: $L_5, L_6, L_8, L_9$. Without loosing generality, one may assume that $L_2 \cap L_5\cap L_7$ is the triple point. 
 We search the third line passing through the triple point  $L_2 \cap L_8$. It can be either $L_4$ or $L_6$ (or $L_9$, but this is reducible to the $L_6$ case, modulo a re-labelling $6 \leftrightarrow 9$).  If $L_2 \cap L_8$ would be on the line $ L_4$, then this would force $L_2 \cap L_6$ to be on the line   $L_9$, contradiction to the fact that $L_1 \cap L_6 \cap L_9$ is a triple point. Then necessarily  $L_2 \cap L_6\cap L_8$ is a triple point, and this forces the existence of the triple point $L_2 \cap L_4 \cap L_9$. Now, the triple point $L_4 \cap L_8$ can only be on the line $L_3$. Successively, we conclude that the line $L_9$ must pass through the triple point $L_7 \cap L_8$, then $L_6$ must pass through the triple point $L_3\cap L_7$. Finally, we necessarily must have the triple points $L_3 \cap L_5\cap L_9$ and $L_4 \cap L_5\cap L_6$. But this describes exactly the lattice of the Ceva arrangement.

For the remaining case, when $\A$ has at least one double point, we need to recall the inequality from \ref{thm:cohen}. By hypothesis, $b_{3}(\A)=b_1(M, {\bf 1}/3) >0$, hence by \ref{thm:cohen} one has $\beta_{13}(\A) \geq 1$.

The computation of $\beta_{13}(\A)$ resumes to solving over $\F_3$ the system $\bf S$ of linear equations, with variables labelled by the lines of $\A$. Since $\beta_{13}(\A) \geq 1$, the dimension of the space of solutions of the system  $\bf S$ associated to $\A$ is at least 2, so $\bf S$ admits a non-constant solution $(a_{L})_{L \in \A}$. We show that this implies that $\A$ is a reduced pencil.
The converse of this claim, i.e. the fact that a 
line arrangement defined by a $(3,3)$-net (that is, composed of a reduced pencil, by the terminology of  \cite{Dimca}) has a non-trivial $h^1$ is a known result also (see \cite[Thm 3.1]{DP}). This implies that  $b_{3}(\A)=b_1(M, {\bf 1}/3) >0$, hence  a reduced pencil necessarily has $\beta_{13}(\A) \geq 1$.

Assume $\A$ has at least one double point. The number of double points is divisible by three, so, in this case, $\A$ must have at least three double points. Fix three arbitrary double points in the lattice of $\A$. We consider two different cases. 

{\it (1)} If the three double points are not collinear, let $L_1, L_2, L_3$ be the triangle that realizes these points, and denote by $a_1, a_2, a_3$ the weights (for the considered non-constant solution) of  $L_1, L_2, L_3$. From \ref{ndiv} we get $a:=a_1=a_2=a_3$. We can choose a line $L_4$  with associated weight $b_1$, and $b_1 \neq a$. If $L_4$ meets any of the lines $L_1, L_2, L_3$ in a double point, then, by \ref{ndiv}, we get $b_1=a$. Otherwise,  $L_4$ must intersect each of the lines $L_1, L_2, L_3$ in triple points. So, there are three new lines $L_7, L_8, L_9$, with corresponding weights $c_1, c_2, c_3$ such that $(L_1, L_4, L_7), \; (L_2, L_4, L_9)$ and $(L_3, L_4, L_8)$ intersect in triple points. By \ref{div} we have that $c:= c_1=c_2= c_3$ and $a+b+c=0$. If any of the intersections $L_1 \cap L_8$ and $L_1 \cap L_9$ would be a double point then $a=b=c$. Otherwise, all intersections corresponding to couples of weights $(a_i, c_j)$ must be triple points.
Then through the intersection $L_1 \cap L_8$ passes another line, which must be different from $L_4$. Denote this new line by $L_5$. By \ref{div}, the weight $b_2$ corresponding to $L_5$ satisfies the equation $a_1+b_2+c_2=0$, hence $b:=b_1=b_2$. By a similar argument the intersection $L_1 \cap L_9$ contains a line $L_6$ different from $L_4$ and $L_5$. If $L_6$ would coincide with any of the lines of weights $a$ or $c$ then we would get $a=b=c$. Otherwise, $L_6$ must be different from any $L_i, \; i \neq 6$, and have corresponding weight $b_3=b$.

So $\A=\{L_1, \dots, L_9\}$ and we have up to now the following triple points: $L_1 \cap L_4 \cap L_7, \; L_2 \cap L_4 \cap L_9, \; L_3 \cap L_4 \cap L_8, \; L_1 \cap L_6 \cap L_9, \; L_1 \cap L_5 \cap L_8$. If any of the intersections of lines corresponding to couples of weights $(a_i, b_j), \;(a_i, c_j)$ or $(b_i, c_j)$, for $i,j \in \{1,2,3\}$ would be double points, then we would have $a=b=c$. Otherwise, we have the triple points $L_3 \cap L_5 \cap L_9, \;L_3 \cap L_6 \cap L_7, \;L_2 \cap L_6 \cap L_8, \;L_2 \cap L_5 \cap L_7$. For instance, let us explain in detail why should the point $L_3 \cap L_5 \cap L_9$ exist. We know that $L_5 \cap L_9$ cannot be a double point, since this would imply  $b=c=a$. Moreover, the third line that passes through this point must a line of weight $a$, otherwise we obtain once again $b=c=a$. In conclusion, $L_5 \cap L_9$ is a triple point, and the third line that contains this point must be $L_1, \;L_2$ or $L_3$. On the other 
 hand, the existence of the triple points $L_2 \cap L_4 \cap L_9$ and $L_1 \cap L_6 \cap L_9$ forces this  line to be $L_3$, hence the triple point $L_3 \cap L_5 \cap L_9$.

The existence of the other three triple points ($L_3 \cap L_6 \cap L_7, \;L_2 \cap L_6 \cap L_8, \;L_2 \cap L_5 \cap L_7$) bears a similar argument.

In conclusion, in this case, in order to have the space of solutions of $\bf S$ of dimension at least $2$, $\A$ needs to be a $(3,3)$-net, with $\A_1=\{L_1, L_2, L_3\}, \;\A_2=\{L_4, L_5, L_6\}, \;\A_3=\{L_7, L_8, L_9\}$. We already assumed that $\A_1$ contains only double points. As for the remaining subarrangements, it follows from Thm. \ref{3nets} one can have either double points in both $\A_2$ and $\A_3$, or double points in one of them and a triple point in the remaining subarrangement.

{\it (2)} Assume $\A$ has three collinear double points, say on a line $L_1 \in \A$. Denote by $L_2, L_3, L_4$ the lines that realize these double points by intersecting $L_1$. Then, since all the other lines in $\A$ intersect $L_1$ and $|\A|=9$, there must be another line $L_5$ that intersects $L_1$ in a double point. By \ref{ndiv}, the weights $a_1, \dots, a_5$ corresponding to the lines $L_1, \dots, L_5$ are equal. Denote by $a$ their common value. Take an arbitrary line $L_6 \in \A$, different from the previous ones, of weight $b$. If $L_6$ intersects any of the lines $L_1, \dots, L_5$ in a double point, then $a=b$. Otherwise, assume that $L_6$ intersects any of the lines $L_1, \dots, L_5$ in triple points. A triple point of type $L_i \cap L_j \cap L_6$, for $i,j \in \{1, \dots, 5\}$ would lead, by \ref{div}, to $a=b$. So the only possibility for $\bf S$ to have a solution space of dimension at least $2$ would be that $L_6$ to intersect the lines $L_1, \dots L,
 _5$ in triple points of type $K_i \cap L_i \cap L_6, \; i \in \{1, \dots, 5\}$, with $\{L_1, \dots, L_6\} \cap \{K_1, \dots, K_5\} = \emptyset$, but this contradicts the fact that $|\A|=9$.
 \end{proof}
 
 The above proof  gives an elementary argument for a result of Libgober (\cite[Thm 1.2]{Libgober}), in the case $m=3$ (where $d=3m$ is the number of lines of $\mathcal{A}$).  The result states that projective line arrangements with only double and triple points have non-trivial monodromy action on the degree $1$ cohomology module of the Milnor fiber must be reduced pencils.

In the final part of this paper we extend Libgober's result to line arrangements $\A$ with $|\A|\leq 14$, having points of multiplicity $\leq 5$.

 We give now a number of results to be used in the proof of Theorem \ref{withcvaduple}.
Unless otherwise specified, $\A$ is an arrangement  with points of multiplicity up to $5$, having at least a quadruple or a quintuple point (otherwise the result follows from Libgober's Theorem). 

\begin{lemma}
\label{F2partition}
Let $\A$ be such that $|\A|=12$ and $b_2(\A) \neq 0$. Then the system $\bf S$ with $\F_2$ coefficients admits a non-constant solution $(a_{H})_{H \in \A} \in \F_2^{|\A|}$, and there is a partition $\A_a \sqcup \A_b$ of $\A, \; |\A_a|=|\A_b|=6$, such that  $a_{H}=a, $ for all $H$ in $\A_a $, and $a_{H}=b \neq a$, for all $H$ in  $\A_b$.
\end{lemma}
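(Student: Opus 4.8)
The plan is to split the statement into its two assertions: the existence of a non-constant $\F_2$-solution, and the $6$-$6$ balance of the associated partition. For the first, I would invoke Theorem \ref{thm:cohen} with $p=2$, $s=1$: combined with the identity $b_2(\A)=b_1(M,{\bf 1}/2)$ recalled just after that theorem, the hypothesis $b_2(\A)\ne 0$ gives $\beta_{12}(\A)\ge 1$. By Lemma \ref{beta} the $1$-cocycles of \eqref{eq:aok} over $\F_2$ are exactly the solutions of $\bf S$, and since the coboundary space is $1$-dimensional, the solution space has dimension $\beta_{12}(\A)+1\ge 2$. The constant solutions form the $1$-dimensional subspace $\{(0,\dots,0),(1,\dots,1)\}$, so at least one solution lies outside it and is non-constant.

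For the balance, fix such a non-constant solution $(a_H)_{H\in\A}$, which takes only the values $0,1\in\F_2$; let $\A_a,\A_b$ be the lines of weight $0$ and $1$ respectively, both non-empty. The key is a local rigidity analysis exploiting that all multiplicities are $\le 5$. At a double point \eqref{div} reads $a_H+a_K=0$, forcing equal weights; at triple and quintuple points \eqref{ndiv} forces equal weights; only at a quadruple point can lines of different weights meet, and there \eqref{div} forces the number of weight-$1$ lines to be even, hence exactly $2$ of each weight whenever both weights occur. Consequently two lines of different weight intersect only at such ``mixed'' quadruple points, each of which carries exactly two lines of $\A_a$ and two of $\A_b$.

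The balance then follows by double counting. Writing $r_0=|\A_a|$, $r_1=|\A_b|$ with $r_0+r_1=12$, every line of $\A_a$ meets all $r_1$ lines of $\A_b$, and these meetings group into mixed quadruple points covering exactly two lines of $\A_b$ each, so $r_1$ is even; symmetrically $r_0$ is even. Letting $t$ be the number of mixed quadruple points, counting mixed pairs gives $r_0 r_1 = 4t$, while distinct mixed points determine distinct pairs of $\A_a$-lines, so $t\le \binom{r_0}{2}$ and likewise $t\le\binom{r_1}{2}$. The inequality $r_0 r_1/4\le \binom{r_0}{2}$ simplifies to $r_1\le 2r_0-2$, and with $r_0+r_1=12$ this forces $r_0\ge 5$; symmetrically $r_1\ge 5$. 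Hence $r_0,r_1\in\{5,6,7\}$, and evenness leaves only $r_0=r_1=6$.

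I expect the main obstacle to be the local rigidity step, namely pinning down that distinct weights can meet only at quadruple points and then only in a balanced $2$-$2$ pattern; this is exactly where the multiplicity bound $\le 5$ is indispensable, since a sextuple point would permit an even but unbalanced split (e.g.\ $4$-$2$) and wreck the counting. Once this is in place the remaining arithmetic is routine, and in fact the argument shows that \emph{every} non-constant $\F_2$-solution, not merely some one, is balanced into two classes of six.
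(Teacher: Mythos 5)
Your proof is correct, and its skeleton matches the paper's: the existence of a non-constant solution is obtained exactly as in the paper (Theorem \ref{thm:cohen} gives $\beta_{12}(\A)\ge 1$, the coboundaries are the constants, so the solution space of $\bf S$ has dimension at least $2$), and the structural core of the second half is also the same, namely that the multiplicity bound $\le 5$ forces any intersection of two lines of different weights to be a quadruple point carrying exactly two lines of each weight. Where you genuinely diverge is in converting this into the $6$--$6$ balance. The paper counts along a single line $H\in\A_a$: the $\A_b$-lines pair up on $H$, so $|\A_b|=2k$; the $k$ partner $\A_a$-lines at those quadruple points are distinct, so $|\A_a|\ge k+1$; hence $3k+1\le 12$, i.e.\ $k\le 3$, and the cases $k=1,2$ are then excluded by a separate, only sketched, argument on the lines of $\A_b$ (each such line would carry a point whose equation forces $a=b$). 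You instead run a global symmetric double count: with $t$ the number of mixed quadruple points, $r_0r_1=4t$, and the injection from mixed points to pairs of $\A_a$-lines (respectively $\A_b$-lines) gives $t\le\binom{r_0}{2}$ and $t\le\binom{r_1}{2}$, whence $r_0,r_1\ge 5$; combined with the evenness of both $r_0$ and $r_1$ this pins down $r_0=r_1=6$. Your version buys uniformity --- the single symmetric inequality replaces the paper's residual case analysis, and it makes explicit that \emph{every} non-constant $\F_2$-solution is balanced, not just a chosen one --- at the mild cost of introducing the global count $t$; the paper's version stays local to one line but leaves the elimination of $k=1,2$ to the reader.
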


\begin{proof}
The hypothesis $b_2(\A) \neq 0$ implies (via Theorem \ref{thm:cohen}) that $\beta_{12}>0$, that is, the space of solutions of $\bf S$ has dimension at least $2$. This means, we have a non-constant solution $(a_{H_{i}})_{H_i \in \A}$. This solution gives a partition of the set  $\overline {1, 12}$ into  $\{ i \in \overline {1, 12} \;|\; a_{H_i}=a \}\sqcup \{ i \in \overline {1, 12} \;| \; b_{H_i}=b \}, \; b\neq a$, and consequently a partition $\A_a \sqcup \A_b$ of the set of lines of $\A$. We will prove that $|\A_a|=|\A_b|=6$.

 Consider the multiple points on a line $H \in \A_a$. To have $a \neq b$, each intersection point of $H$ to a line in $\A_b$ must be a quadruple point, containing two lines from $\A_a$ and two lines from $\A_b$. Hence the lines from $\A_b$ intersect $H$ in pairs, so $|\A_b|=2k, \;|\A_a| \geq k+1$ and $3k+1 \leq 12$. The only possible values for $k$ are $1, 2$ and $3$. It is easy to see that in the first two cases any line in $\A_b$ would contain a multiple point for which the associated equation  (\eqref{div} or \eqref{ndiv}) would translate into $a=b$.
\end{proof}

\begin{lemma}
\label{3quadruple}
If $|\A|=12$ and $\bf S$ admits a non-constant solution, then each line of the arrangement contains exactly $3$ quadruple points. 
\end{lemma}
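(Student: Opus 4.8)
The plan is to start from the balanced partition $\A = \A_a \sqcup \A_b$, $|\A_a| = |\A_b| = 6$, associated to the non-constant $\F_2$-solution, which is precisely what the proof of Lemma \ref{F2partition} extracts (its $|\A_a|=|\A_b|=6$ computation uses only the non-constant solution and $|\A|=12$). The structural fact I would carry over from that proof is that every point lying simultaneously on an $\A_a$-line and an $\A_b$-line (a \emph{mixed} point) is forced to be a quadruple point with precisely two lines from each part: an odd multiplicity would force all its weights equal via \eqref{ndiv} and so preclude mixing, a mixed double point would give the contradiction $a + b = 0$ in \eqref{div} with $a \neq b$, and the bound $m_X \leq 5$ excludes even multiplicities above four, so a mixed point has $m_X = 4$ and the sum-zero condition of \eqref{div} forces the $2+2$ split.

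With this in hand I would fix an arbitrary line $H$, say $H \in \A_a$ by symmetry of the partition. Since any two lines of $\P^2\C$ meet, each of the six $\A_b$-lines crosses $H$ at a mixed, hence quadruple, point carrying two $\A_b$-lines; so the six $\A_b$-lines pair off into exactly three quadruple points $P_1, P_2, P_3$ on $H$, which already supplies the three quadruple points asserted. Each $P_i$ consists of $H$, two $\A_b$-lines and one further $\A_a$-line $M_i$, and the $M_i$ are pairwise distinct since a line meets $H$ only once; thus $P_1, P_2, P_3$ absorb three of the five $\A_a$-lines other than $H$, leaving exactly two, say $N_1, N_2$.

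The hard part — really the only delicate point — is ruling out a fourth quadruple point on $H$, and I would do this by a line-counting argument. No $N_j$ can pass through any $P_i$, for that would give $P_i$ three $\A_a$-lines and two $\A_b$-lines, a mixed point of multiplicity five, contradicting both the even-multiplicity requirement and the $2+2$ description. Since all six $\A_b$-lines are already consumed at $P_1, P_2, P_3$, no additional mixed point can occur on $H$; any further quadruple point would therefore be an all-$\A_a$ point, demanding three $\A_a$-lines besides $H$ concurrent at a single point off the $P_i$. Only $N_1$ and $N_2$ remain available there, so no such point exists and $H$ carries exactly three quadruple points. As the whole argument is symmetric under interchanging $\A_a$ and $\A_b$, the same count holds for every line, which finishes the proof.
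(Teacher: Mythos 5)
Your proof is correct and follows essentially the same route as the paper's: the balanced partition from Lemma \ref{F2partition}, the forced $(a,a,b,b)$ structure of mixed intersection points via \eqref{div} and \eqref{ndiv}, and the pairing of the six opposite-weight lines into three quadruple points on each line. You are in fact slightly more thorough than the paper's own proof, which stops after the pairing argument and leaves implicit the step you spell out explicitly --- that the three mixed quadruple points already consume three of the five remaining same-weight lines, so the two lines left over cannot produce a fourth, all-same-weight quadruple point.
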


\begin{proof}
In the notations of Lemma \ref{F2partition},  consider the intersections of a given line $L$  in $\A$ of weight, say, $a$, by an arbitrary line of weight $b \neq a$. Unless this intersection point  is a quadruple point formed by lines of weights $a, a, b, b$ we get $a=b$.  This means the lines of weight $b$ intersect $L$ in pairs, in quadruple points. Since there are six lines of weight $b$, there must be exactly three quadruple points on $L$, each giving (by \eqref{ndiv}) an equation of type $a+a+b+b=0$.  
\end{proof}

Let $\LL \sqcup \Ka$ be a partition of the set of lines of an arrangement $\A$. We name the multiple points of $\A$ by the induced partition of their lines.  For instance, a quadruple point in $\A$ is called of type $(LKLK)$ if it is the intersection of two lines in $\LL$ and two lines in $\Ka$.

\begin{lemma}
\label{14partition}
In the above notations, there is no arrangement $\A$ of $14$ lines that admits a partition into two subsets $\A=\LL \sqcup  \Ka$ such that $ |\LL|=6, \; | \Ka|=8$ and each line in $ \mathcal{L}$ contains exactly $4$ quadruple points of type $(LKLK)$, while each line in $  \Ka$ contains exactly $3$ quadruple points of type $(LKLK)$.
\end{lemma}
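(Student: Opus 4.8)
The plan is to extract rigid combinatorial structure from the incidence hypotheses, reduce the only surviving possibility to a system of collinearity conditions on six lines, and then show that system forces two of those lines to coincide.

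First I would double-count. Writing $N$ for the number of $(LKLK)$-points, incidences with $\LL$ give $2N=6\cdot4=24$ and incidences with $\Ka$ give $2N=8\cdot3=24$, so $N=12$; this is consistent and yields no contradiction by itself. The useful consequence is local. Fix $L\in\LL$: its four $(LKLK)$-points carry $4\cdot2=8$ lines of $\Ka$, which is all of $\Ka$, and since each line of $\Ka$ meets $L$ in a single point, every line of $\Ka$ passes through one of these four points, exactly two at a time. Thus each $L\in\LL$ induces a pairing of the eight lines of $\Ka$, and dually each $K\in\Ka$ induces a pairing of the six lines of $\LL$ (its three points absorb all $3\cdot2=6$ lines of $\LL$). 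In particular every $\LL$--$\Ka$ intersection is one of the twelve $(LKLK)$-points.

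Next I would pin down the global shape of these pairings. Two lines of $\LL$ meet only once, so each pair of $\LL$-lines supports at most one $(LKLK)$-point; as each $\LL$-line is paired with exactly four others, the pairing graph on $\LL$ is a simple $4$-regular graph on six vertices, i.e. $K_6$ minus a perfect matching. After relabelling I may take the complementary matching to be $\{L_1L_2,L_3L_4,L_5L_6\}$ (three plain double points), so the six lines split into three pencils of two, through three points $q_1,q_2,q_3$. Each $K\in\Ka$ meets all six $\LL$-lines at its three points, so the three $\LL$-pairs it determines form a \emph{perfect matching} of this $4$-regular graph; there are exactly eight such matchings. If two lines of $\Ka$ realized the same matching they would share all three of its points, which is impossible for distinct lines; hence $K\mapsto(\text{its matching})$ is injective, and eight lines mapping to eight matchings make it a bijection onto all eight. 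Therefore \emph{all eight matching-triples of the twelve points must be collinear}.

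Finally I would coordinatize: put $q_1,q_2,q_3$ at the reference triangle (the degenerate positions, where two $q_i$ coincide or the $q_i$ become collinear, force two of the twelve points to collide and are discarded separately), write the three pencils with slope parameters, and compute the twelve points explicitly. The residual torus of diagonal projective maps leaves invariant three cross-ratios $a,b,c$ and one triple product $\rho$, and each of the eight collinearity determinants reduces to a quadratic $\rho^2-S_k\rho+abc=0$ with the \emph{same} constant term. Since the geometric $\rho$ is nonzero (no $\LL$-line may pass through a $q_i$, else two $(LKLK)$-points coincide), a common root forces all eight coefficients $S_k$ to be equal. The main obstacle is this elimination, but it is short: comparing three of the eight coefficients gives $(a-b)(c-1)=0$ and $(a-1)(b-c)=0$, hence $a=b=c$ because $a,c\neq1$ for distinct lines; substituting $\lambda=a=b=c$ into a fourth comparison then gives $(\lambda-1)^2=0$, i.e. $a=b=c=1$. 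But $a=1$ means $L_1=L_2$, contradicting that $\A$ consists of fourteen distinct lines. Hence no such arrangement exists.
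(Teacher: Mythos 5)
Your combinatorial reduction is correct and is essentially the paper's own: the pairing graph on $\LL$ is $K_6$ minus a perfect matching, each line of $\Ka$ induces one of the eight perfect matchings of that graph, injectivity (two lines cannot share three points) makes this a bijection, and hence all eight matching-triples of quadruple points must be collinear. The gap is in the geometric half, and it is fatal: your case split is exactly backwards. Apply Pappus' theorem (Pascal for the degenerate conic $K_1\cup K_2$) to the two $\Ka$-lines realizing the complementary pair of matchings $\{L_1L_3,\,L_2L_5,\,L_4L_6\}$ and $\{L_3L_5,\,L_2L_4,\,L_1L_6\}$: the hexagon with sides $L_1,L_3,L_5,L_2,L_4,L_6$ has its six vertices alternately on $K_1$ and $K_2$, so the intersections of its opposite sides --- precisely your $q_1=L_1\cap L_2$, $q_2=L_3\cap L_4$, $q_3=L_5\cap L_6$ --- are collinear. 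Thus in any arrangement satisfying the hypotheses the three double points \emph{must} be collinear: your main computation, carried out with $q_1,q_2,q_3$ as the vertices of the reference triangle, treats a configuration that cannot occur, while the collinear case you discard is the only one that can arise.

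Moreover, your reason for discarding it is false. Collinearity of $q_1,q_2,q_3$ does not force two of the twelve quadruple points to collide: the paper's Step 3 parametrizes exactly this situation ($L_1:x=0$, $L_2:y=x$, $L_3:y=a(x-b)$, $L_4:y=c(x-b)$, $L_5:y=d(x-1)$, $L_6:x=1$, with the three double points $(0,0),(b,0),(1,0)$ all on the $x$-axis), and for generic $a,b,c,d$ the twelve intersection points are pairwise distinct. Ruling this case out is the real content of the lemma: one must show that the eight collinearity conditions --- which, by the converse of Pascal's theorem, reduce to the four independent equations \eqref{e1}--\eqref{e4} --- admit no admissible solution in $a,b,c,d$. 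None of that work appears in your proposal, so the argument does not close. (Incidentally, once Pappus is in hand, your triangle case needs no elimination at all: two of the eight collinearity conditions already force $q_1,q_2,q_3$ collinear, an immediate contradiction; all the genuine difficulty lives in the collinear case.)
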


\begin{proof}
We find equations for the lines in $\LL$, depending in all of 4 parameters. Each line of type $K$, containing 3 quadruple points of type $(LKLK)$ (q.p. for short) will impose an equation, so we'll get 8 equations, giving in the end 4 distinct equations. Then we show that this system has no solution satisfying the imposed conditions.

{\it Step 1. Partition of $\LL$ into $3$ sets.}

Consider a line $L_1 \in \LL$. The remaining 5 lines have to determine 4 q.p. on $L_1$, so there is one of them, call it $L_2$, such that $A=L_1 \cap L_2$ is a double point (d.p. for short) of the arrangement.
Denote $L_3$ any of the remaining 4 lines in $\LL$. Each of the lines $L_1$ and $L_2$ intersect $L_3$ in q.p. points, so the remaining (unlabelled) 3 lines produce 2 q.p. on $L_3$. Let $L_4$ denote the line among them such that $B=L_3 \cap L_4$ is a d.p.. Denote the remaining lines by $L_5$ and $L_6$ and note that $C=L_5 \cap L_6$ is a double point.

{\it Step 2. The points $A,B,C$ are collinear}

To prove this we use Pascal Hexagon Theorem: if the vertices of a hexagon sit on a conic, then the intersection of the opposite edges are collinear points.

Our hexagon is the union of the 6 lines in $\LL$. The pairs of opposite edges are $(L_1,L_2)$,  $(L_3,L_4)$ and $(L_5,L_6)$. Choose the order $L_1,L_3,L_5,L_2,L_4,L_6$ (this does not restrict the generality, see step 4, where all the possible orderings are considered). Then the vertices are $v_1=L_1 \cap L_3$, $v_2=L_3 \cap L_5$, $v_3=L_5 \cap L_2$, $v_4=L_2 \cap L_4$, $v_5=L_4 \cap L_6$ and $v_6=L_6 \cap L_1$.

Consider the vertices $v_1$ and $v_3$. Note that there are 2 lines of type K passing through $v_1$.
Any such line will meet again the union of lines in $\LL$ exactly in two  q.p. constructed above, not situated on the lines $L_1$ or $L_3$. There are only two points of this type on $L_5$, so one of these lines is the line determined by $v_1$ and $v_3$. Call it $K_1$. We claim that the third q.p. on $K_1$ is exactly $v_5$. Indeed, the 3 q.p. on $K_1$ should involve all the 6 lines in $\LL$, each occurring exactly once, and this yields our claim.

In exactly the same way we show that there is a line $K_2$ containing the other 3 vertices $v_2,\;v_4$ and $v_6$. The union $K_1 \cup K_2$ is the conic allowing us to apply Pascal's Theorem.

{\it Step 3. The equations for the lines in $\LL$}

By throwing the point $v_6$ at infinity, and choosing well the coordinates $(x,y)$ in the affine plane $\C^2$ we can assume the following.

$A=(0,0)$, $B=(b,0)$ for $b \in \C^*$, $b \ne 1$ and $C=(1,0)$.

$L_1: x=0$, $L_2: y=x$, $L_3: y=a(x-b)$ and $L_4:y=c(x-b)$ with $a,c \in \C^*$ and $a \ne c$, $L_5: y=d(x-1)$ with $d \in \C^* \setminus \{1\}, d \neq a \neq c$ and $L_6: x=1$.

Hence the 4 parameters are $a,b,c,d$.

{\it Step 4. The $4$ equations for the $4$ parameters}

They are obtained as follows. We have to list the partitions of the set $\LL$ into $3 $subsets with cardinal two each, such that $L_1,L_2$, resp. $L_3,L_4$ and $L_5,L_6$ are not in the same subset. Here is the list, obtaining by considering all the possible circular ordering of the $6$ lines in $\LL$ such that $L_1,L_2$, $L_3,L_4$ and $L_5,L_6$ are  opposite edges.

1. $(L_1,L_3), (L_5,L_2), (L_4,L_6)$     2.$(L_1,L_3), (L_6,L_2), (L_4,L_5)$

3. $(L_1,L_4), (L_5,L_2),(L_3,L_6)$     4. $(L_1,L_4), (L_6,L_2), (L_3,L_5)$

5. $(L_1,L_5), (L_3,L_2), (L_6,L_4)$       6. $(L_1,L_5), (L_4,L_2), (L_6,L_3)$

7. $(L_1,L_6), (L_3,L_2),(L_5,L_4)$       8. $(L_1,L_6),(L_4,L2_),(L_5,L_3).$

Since there are $8 \;K$-lines and the $3$ q.p. on each such line give a partition of the lines in $\LL$ as described before, each partition corresponds to 3 q.p. which should be on a $K$-line. 
However, due to the converse of the Pascal's Hexagon Theorem, the 8 triplets give rise to only 4 distinct equations.

Indeed, if we write that the 3 points corresponding to the first partition $v1,v3,v5$ are collinear , then we get by Pascal's Theorem that the 3 points corresponding to the 8-th partition (which are nothing else but the vertices $v2,v4,v6$ in Step 2) are also collinear.

A simple analysis shows that the 4 independent equations come from the first 4 partitions above.
The corresponding equations are the following.
\begin{equation} \label{e1}
ab-bdc+dc-d=0.
\end{equation} 
\begin{equation} \label{e2}
(abc-cd)(1-b)-bc+d=0.
\end{equation} 
\begin{equation} \label{e3}
ad-abd+bc-d=0.
\end{equation} 
\begin{equation} \label{e4}
abd-ad-ab^2c-ab+d+abc=0.
\end{equation} 
It is easy to check that the system given by the equations \eqref{e1} , \eqref{e2} , \eqref{e3} and \eqref{e4} has no solution.
\end{proof}

 \begin{remark}
 \label{nonreduced}
  
In  \cite[Remark 1.3]{Dimca}, the first author gives an example of an arrangement consisting of $15$ lines having points of multiplicity $6$, which satisfies $h^1 \ne Id$, but it is not composed of a reduced pencil. This shows that the next result is optimal.

 \end{remark}

\begin{theorem}
\label{withcvaduple}
Let $\A \subset \mathbb{P}^2\C$ be an essential line arrangement with $|\mathcal{A}|\leq 14$   such that $\mathcal{A}$ has points of multiplicity up to $5$. Assume the monodromy operator $h^1:H^1(F)\rightarrow H^1(F)$ is not trivial. Then $\A$ is a either a reduced $(3,q \leq 4)$-net or a reduced $(4,3)$-net. 
In particular,  the non-triviality of the monodromy $h^1:H^1(F)\rightarrow H^1(F)$  is detected by the combinatorics for such line arrangements $\A$.
\end{theorem}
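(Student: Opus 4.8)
The plan is to convert the nontriviality of $h^1$ into a non-constant solution of the combinatorial system $\bf S$ over a suitable prime field, and then to read off the pencil structure from the local constraints \eqref{div}--\eqref{ndiv}. First I would use the monodromy decomposition \eqref{eq:ciclo}: since $b_m(\A)\ne 0$ forces $m\mid d$ and the presence of a point of multiplicity divisible by $m$, and all multiplicities are $\le 5$, the only $m$ that can occur are $m\in\{2,3,4,5\}$, all of which are prime powers. Writing $m=p^s$ and invoking $b_{p^s}(\A)\le \beta_{1p}(\A)=b_1(M,{\bf 1}/{p^s})$ from Theorem \ref{thm:cohen}, nontriviality of $h^1$ yields $\beta_{1p}(\A)\ge 1$, hence a non-constant $\F_p$-solution of $\bf S$ (Lemma \ref{beta}), for some $p\in\{2,3,5\}$. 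Because $h^1=Id$ whenever $d$ is prime (\cite{CS}), $d$ is composite and $\le 14$, so $d\in\{4,6,8,9,10,12,14\}$. Moreover, if $\A$ has only double and triple points then Libgober's theorem (\cite{Libgober}) already gives a reduced $(3,m)$-net with $3m=d\le 14$ and $m\ge 2$ (as $m=1$ is a non-essential pencil), i.e. a reduced $(3,q\le 4)$-net; so I may assume $\A$ has a quadruple or quintuple point, which is the standing assumption behind Lemmas \ref{F2partition}--\ref{14partition}.

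For $p=3$ the weight function separates lines of distinct weight only at points of multiplicity divisible by $3$, i.e. at triple points (multiplicities $\le 5$), where the three weights sum to $0$ in $\F_3$. I would show the three values $0,1,2$ must all occur: if only two occurred, a triple point on two distinct weights would be forced monochromatic, so two lines of different weight could never meet without \eqref{ndiv} making them equal. Hence every point joining distinct weights is a rainbow triple point $(0,1,2)$ of multiplicity exactly $3$, every cross pair of lines meets at such a point, and counting rainbow points along a line gives $|W_0|=|W_1|=|W_2|=q$ with $d=3q\le 14$. Thus $\A$ is a reduced $(3,q\le 4)$-net, internal quadruple or quintuple points being harmless since they lie inside a single weight class; this extends Proposition \ref{libg}.

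For $p=2$ the non-constant solution gives a partition $\A=\A_a\sqcup\A_b$, and \eqref{div} at double points forces lines of different weight to meet only at quadruple points of type $(aabb)$. When $d=12$, Lemmas \ref{F2partition} and \ref{3quadruple} yield $|\A_a|=|\A_b|=6$ with exactly three quadruple points on each line, and I would refine this $2$-coloring into the four classes of a $(4,3)$-net, the nine quadruple points realizing one line from each class. For the remaining even $d$ I would argue by incidence counting: a weight class of size $2k$ meets a line of the other class in $k$ quadruple points, so size $2$ forces several points to share the same two lines, and sizes $4$ or $6$ force too few admissible connecting lines (a line joining two quadruple points with the same index pair would coincide with a line of the opposite class); this excludes any non-constant $\F_2$-solution for $d\in\{4,6,8,10\}$, so prime $2$ contributes nothing there. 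For $d=14$, parity and pigeonhole leave only the $6{+}8$ split with four quadruple points on each short-class line and three on each long-class line, which is exactly the configuration ruled out by Lemma \ref{14partition}. Assembling these with the $p=3$ analysis, $d=6$ gives a $(3,2)$-net, $d=12$ gives a $(3,4)$- or $(4,3)$-net, while $d\in\{4,8,14\}$ support no non-constant solution for any admissible prime, so there $h^1=Id$ and the statement is vacuous.

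Finally, for $p=5$ one has $d=10$, and distinct-weight lines meet only at quintuple points. A short $\F_5$-computation shows that such a point cannot carry exactly two distinct weights, since $m_1(\alpha_1-\alpha_2)\equiv 0\pmod 5$ then forces it monochromatic; hence at least three weights meet at every mixing point, and the induced structure is a net/multinet with $\ge 3$ classes on only ten lines with quintuple base points, which the classification bound on the number of classes of a complex (multi)net together with a direct incidence count excludes. The hard part will be the $p=2,\ d=12$ step: passing from the data ``$6{+}6$ partition, three $(aabb)$-points per line'' to an actual $(4,3)$-net requires distinguishing the net's complete-bipartite incidence pattern from other cubic configurations on six vertices (such as the prism), which is where a realizability analysis in the style of the proof of Theorem \ref{3,4-nets} is needed; making the $p=5$ exclusion fully rigorous is the second delicate point, and the closing ``detected by the combinatorics'' assertion then follows, since being a reduced net is a lattice property and nets have combinatorially determined nontrivial monodromy while all other cases have $h^1=Id$.
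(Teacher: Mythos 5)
Your skeleton matches the paper's: use \eqref{eq:ciclo}, Theorem \ref{thm:cohen} and Lemma \ref{beta} to turn non-triviality of $h^1$ into a non-constant $\F_p$-solution of ${\bf S}$ for some $p\in\{2,3,5\}$, then read off a net structure, invoking Lemmas \ref{F2partition}, \ref{3quadruple} and \ref{14partition} exactly where the paper does (your $|\A|=14$ treatment is essentially the paper's). Your $\F_3$ analysis is in fact cleaner and more uniform than the paper's: where the paper builds the three classes line by line starting from a quadruple point and separately kills the quintuple-point case, you observe that any point where distinct weights meet must be a rainbow triple point (since $3\nmid 2,4,5$ forces equality via \eqref{ndiv}, and a triple point with weights $(\alpha,\alpha,\beta)$ forces $\beta=-2\alpha=\alpha$ via \eqref{div}), after which pairing along lines gives three classes of equal size $q$ and the $(3,q)$-net structure for every $d$ at once. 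That part is correct and is a genuine simplification.

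The proposal nonetheless has a genuine gap, and it sits at the center of the theorem: the $p=2$, $|\A|=12$ step. You reach the correct intermediate statement (a $6+6$ partition with exactly three quadruple points of type $(aabb)$ on each line), you correctly identify what remains to be shown (the incidence graph on each class of six must be $K_{3,3}$, not the prism), but you then defer this to ``a realizability analysis in the style of Theorem \ref{3,4-nets}'' and never carry it out. This deferred step is precisely what the paper's Case (2) proves, and it does so purely combinatorially, not by realizability: fixing a line $a_1$ with quadruple points $(a_1,a_2,b_1,b_2)$, $(a_1,a_3,b_3,b_4)$, $(a_1,a_4,b_5,b_6)$, the paper shows that the residual pair $(a_5,a_6)$ can only be a double point or complete the triple point $(a_1,a_5,a_6)$, by excluding triple points $(a_5,a_6,a_i)$ (no admissible pair among $b_1,\dots,b_6$ can then pass through the forced quadruple point at $a_2\cap a_3$) and quadruple points $(a_5,a_6,b_i,b_j)$ (each placement of the two remaining quadruple points on $a_6$ forces $a_5$ to carry three double points, contradicting the count that three quadruple points per line leave room for exactly two double points). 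Without this argument, the conclusion that $b_3=0$ together with $b_2>0$ or $b_4>0$ yields a $(4,3)$-net is simply not proved.

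A second, smaller defect is the $p=5$ ($d=10$) exclusion. A non-constant $\F_5$-solution does not ``induce a net/multinet with $\geq 3$ classes'': the weight classes need not have equal sizes, and nothing guarantees that each quintuple point meets every class in exactly one line, so appealing to the classification of complex (multi)nets is unjustified as stated. Your first step there is fine (a mixing quintuple point cannot carry exactly two distinct weights, since $m_1(\alpha-\beta)=0$ in $\F_5$ with $1\le m_1\le 4$ forces $\alpha=\beta$), but the case must then be closed by a direct incidence count — for instance, pair counting shows a $10$-line arrangement has at most four quintuple points, after which the possible class-size distributions are eliminated one by one. To be fair, the paper relegates $|\A|=10$ to the omitted ``much simpler'' cases, but the mechanism you assert would not work as written.
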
 

\begin{proof}
We give details only for the cases $|\A|=12$ and $|\A|=14$. The other cases that may need a proof ($|\A| \in \{6, \;8, \; 9,\; 10\}$) may be treated in a similar manner, but are much simpler to analyse.
These two cases we consider are very different: in the case $|\A|=12$ we get $(3,4)$ or $(4,3)$ nets, while the case $|\A|=14$ is shown to be impossible.

Assume $|\A|=12$. Notice that, since we are dealing only with points of multiplicity up to $5$, by \cite[Thm.3.13]{MP} we have that $b_m(\A)=0, \forall m \neq 2,3,4$. Then the non-triviality of the monodromy
 implies that one of the exponents $b_2(\A), b_3 (\A), b_4(\A),$
  in the formula \eqref{eq:ciclo} is non-zero. By Theorem \ref{thm:cohen}, $b_2(\A)$ and $b_4(\A)$ have $\beta_{12}$ as upper bound, while $b_3(\A)$ has $\beta_{13}$ as upper bound. We will show that $\beta_{12} > 0$ implies $\A$ is a $(4,3)$-net,  while $\beta_{13} > 0$ implies $\A$ is a $(3,4)$-net.

For the rest of the proof we use the same method and notations as in second part of the proof of Proposition \ref{libg}, that is we use the key Lemma \ref{beta}.  As explained before, the computation of  $\beta_{1p}(\A), \; p=2,3$, comes down to solving a system $\bf S$ of linear equations over $\F_p$, with variables in one to one correspondence with the lines of $\A$ and one equation for each multiple point in $\A$ (the equations are described by Lemma \ref{beta}). The dimension of the space of solutions for $\bf S$ is equal to $\beta_{1p}+1$.

There are two cases to consider.\\
{\it Case (1)} $b_3(\A) >0$. Then $\beta_{13}>0$, hence the system $\bf S$ with $\F_3$ coefficients admits a non-constant solution $(a_H)_{H \in \A}$.

If $\A$ has only points of multiplicity $2$ and $3$, the claim follows from \cite[Thm 1.2]{Libgober}. 

Otherwise, we may assume that there is a point of multiplicity $4$  or $5$.

{\it Case (1.1)} Assume there are $4$ lines $L_1$, $L_2$, $L_3$ and $L_4$ in $\A$ that intersect in a quadruple point. If we denote $a_{L_i}:=a_i, \; i \in \overline{1,4}$, the corresponding weights, from \eqref{ndiv} we get $a:=a_1=a_2=a_3=a_4$. Since the chosen solution is non-constant, one can choose a new line $K_1$ of weight $b_1, \; b_1 \neq a$. 

If $K_1$  meets any of the lines $L_1, L_2, L_3$ or $L_4$ in a double, quadruple or quintuple point, then, by \eqref{ndiv}, we would get $b_1=a$, which is impossible. It follows that the line $K_1$ must intersect each of the lines $L_1, L_2, L_3$ and $L_4$ in triple points. But through a such triple point could not pass two lines with the corresponding weight equal to $a$ or to $b_1$ (because $a+a+b_1=0$ or $a+b_1+b_1=0$ would imply $b_1=a$). Hence, there exist other four lines $T_1, T_{2}, T_{3}$ and $T_{4}$ of weights $c_1, c_2, c_3$ respectively $c_4$ such that $(L_i, K_1, T_i)$, intersect in triple points for all $i \in \overline{1, 4}$.

Note that the lines $T_1, T_{2}, T_{3}$ and $T_{4}$ are four different lines because otherwise one of them would meet $K_1$ in two distinct points. By \eqref{div} we have that $a_i+b_1+c_i=0$, for all $i \in \overline{1,4}$, and thus $c_1=c_2=c_3=c_4=-a-b_1=:c$. Note also that $c\neq a$ and $c\neq b_1$.

Let us look next at the intersection point between $L_1$ and $T_2$. Since $a\neq c$, this intersection point does not have multiplicity $2,4$ or $5$. It will be then a triple point. But though this intersection point could not pass any $L_i$ (otherwise, by \ref{div}, $a+c+a=0$ and thus $a=c$, impossible) or $K_1$ (because $L_1$ could not meet $K_1$ in two distinct points). It follows then that through the intersection point between $L_1$ and $T_2$ passes a new line $K_2$ which should obviously have weight $b_2=b_1=:b$. 

Analogously, one can prove that $(L_1, T_3, K_3)$ and $(L_1, T_4, K_4)$ are triple points, where $K_3$ and $K_4$ are new lines which should have weights $b_3=b_4=b$. Moreover, the lines $K_1, K_2, K_3$ and $K_4$ are distinct (otherwise, one of them would intersect $L_1$ in two different points).

Hence, we have obtained a partition of  the arrangement $\A$ into $4$ subarrangements $\A_i, \; i = \overline{1,4}$, with $\A_1$ composed of the lines $L_1, L_2, L_3$ and $L_4$,  $\A_2$ composed of the lines $K_i, i= \overline{1,4}$ and $\A_3$ composed of the lines $T_i, i= \overline{1,4}$. Inside each of these three subarrangements one may  have double, triple or quadruple intersection points. Moreover,  through the intersection point between an arbitrary  line from $\A_i$ and an arbitrary line from $\A_j$, for $i,j \in \overline{1,4}, \; i \neq j$ must pass exactly one line from the third subarrangement $\A_k, \; k\in \overline{1,4}, \;k \neq i \neq j$, otherwise we would get $a=b=c$. But this is just the description of a $(3,4)$-net.

{\it Case (1.2)} The arrangement $\A$ does not have quadruple points, hence $\A$ contains at least one quintuple point. The lines involved in that quintuple point are of equal weight $a$. Assume we have a line of weight $b \neq a$. This line must intersect the $a$-lines in triple points, so there are $5$ lines of weight $c$ such that $a+b+c=0$, and another line of weight $b$. There are however intersection points of a line of weight $a$ with a line of weight $c$  not contained in any of the two lines of weight $b$, so we get $a=c$, and then $a=b=c$, contradiction.

{\it Case (2)} $b_3(\A) =0$. Then necessarily $b_2(\A)>0$ or $b_4(\A)>0$, any of those inequalities implying  $\beta_{12} >0$. Hence the system $\bf S$ with $\F_2$ coefficients admits a non-constant solution $(a_H)_{H \in \A} \in \F_2^{12}$.

By Lemmas \ref{F2partition}, \ref{3quadruple}, six of the weights $(a_H)_{H \in \A}$ are equal to some $a \in \F_2$ and the other six are equal to $b \neq a$ and each line contains exactly three quadruple points; there are no quintuple points.

 \noindent To simplify the notation, we identify in what follows the lines and their weights. 
 
\noindent Take an arbitrary line of weight, say, $a_1=a$. As seen before, $a_1$ contains three quadruple points, identified to quadruplets of weights, say, $(a_1, a_2, b_1, b_2), (a_1, a_3, b_3, b_4)$ \\
and $(a_1, a_4, b_5,b_6)$ with $a_i=a$ and   $b_i=b$. The line $a_1$ intersects two more lines $a_5=a$ and $a_6=a$  into either a triple or two double points.

\noindent This would suggest a partition of $\A$ into subarrangements, as such: $\A_1=\{a_1, a_5,  a_6\}$, $\A_2=\{ a_2,a_3,  a_4\}$, and two other subarrangements each containing three of the six lines of type $b$ that are  apparent when considering the multiple points on a $b$-line.

To prove that this partition defines a net structure on $\A$ it is enough to check that, if $( a_1,   a_5, a_6)$ is not a triple point, then $(a_5, a _6)$ is a double point; then the same would apply for $(a_2, a_3, a_4)$ (since the quadruple mixed points on $a_2$ must be $(a_2, a_5), \;(a_2, a_6)$ and $(a_2, a_1)$) and for the $b$-line subarrangements.

Obviously, $(a_5,  a _6)$ cannot make a triple point with a $b$-type line, since this would imply, by \eqref{ndiv}, $a=b$.

 Assume $(a_5,  a _6)$ makes a triple point with another line of type $a$; one may assume without loosing generality that $(a_5,  a_6, a_2)$ is this triple point. In this context, let us examine the other multiple points on $a_2$. The point at the intersection of $a_2$ to $a_3$ must be a quadruple point, with two additional lines of type $b$. However, there are no possible choices among $ b_i, i= \overline {1, 6}$ for the lines of type $b$ to pass through the intersection of $a_2$ to $a_3$.  In conclusion, there are no triple points of type $(a_5, a _6, 
 a_i)$, with $i=2, 3, 4$.
 
 We are left with excluding the case when $(a_5, a _6)$  is a quadruple point. The other two lines in this quadruple point must be of type $b$. Assume, without losing generality, that $(a_5, a _6, b_1, b_3)$ is the quadruple  point. As $a_6$ contains three quadruple points, there are two remaining quadruple points to outline. 
 
 Assume, for instance, that $(a_6, a _2)$ and $(a_6, a _3)$ are the remaining quadruple points. Then one necessarily gets the quadruple points  $(a_6, a _2, b_4, b_5)$ (or $b_6$ instead of $b_5$, but this is a symmetric case) and  $(a_6, a _3, b_2, b_6)$. It follows that $(a_5, a_2)$ is a double point, and so is $(a_5, a_3)$. But this means that $a_5$ contains three double points (since $(a_1, a_5)$ was also a double point), contradiction.
 
 The only other distinct possibility (discarding the symmetries) is for $( a_6, a _3)$ and $(a_6, a _4)$ to form the remaining quadruple points $(a_6,  a _3, b_2, b_5)$ and $(a_6, a _4, b_4, b_6)$ on $a_6$. Similarly this leads to the conclusion that the line $a_5$ has three double points, contradiction.

The last claim follows from the fact that for a line arrangement being a net is a combinatorial property, see for instance \cite{FY} or \cite{Su13}.

Finally, we show that an arrangement $\A$ with $14$ lines cannot have non-trivial monodromy.  Assuming the contrary, for $|\A|=14$, would imply  $b_2(\A)>0$.

Hence, as before, the system $\bf S$ with $\Z_2$ coefficients would admit a non-constant solution $(a_H)_{H \in \A}$. This defines a partition of  $\A$ into proper subsets $\LL \sqcup \Ka$ such that $a_H=a$, for all $H \in \LL$ and $a_H=b \neq a$, for all $H \in \Ka$. This is only possible when each line in $\LL$ intersects each line in $\Ka$ in a quadratic point of type $(LKLK)$. 

Counting the quadruple points of type $(LKLK)$ on an arbitrary line $K\in \Ka$, we get that $|\LL|=2l$ and $3l+1 \leq 14$, so $|\LL| \in \{2,4,6,8\}$. There are actually only three distinct cases, $|\LL| \in \{2,4,6\}$, since $|\LL|=8 \Leftrightarrow |\Ka|=6$ and we are back to the third case.

The first two cases are easily dismissed. Assume $|\LL|\in \{2, 4\}$, and consider a line $L\in \LL$. Since $|\Ka| \geq 10$, there must be multiple points  on $L$ of type $(LK), \; (LKK),$ \linebreak
$(LKKK)$ or $(LKKKK)$. In any case, by Lemma \ref{beta}, we get $a=b$, contradiction.
In the last case, $|L|=6$, we necessarily have $4$ quadruple points of type $(LKLK)$ on each line in $\LL$ and  $3$ quadruple points of type $(LKLK)$ on each line in $\Ka$, otherwise by Lemma \ref{beta} we would get $a=b$.  It follows from Lemma \ref{14partition} that such an arrangement does not exist.

\end{proof}

 \begin{remark}
 \label{conj}
Libgober's result discussed above, Theorem \ref{withcvaduple} and all the examples we know so far suggest that the following property {\bf (P) } holds for hyperplane arrangement complements.

\bigskip

 {\bf (P) }  An equimonodromical rank one local system $\rho(m)$ for $m$ dividing $|\A|$ belongs to the characteristic variety
$$V^1(M)=\{\rho \in \T(M) ~~~~ | ~~~~H^1(M, _{\rho}\C) \ne 0 \}$$
if and only if there is a strictly positive dimensional irreducible component $W_m$ of $V^1(M)$ passing through the origin $1 \in \T(M)$ and such that $\rho(m) \in W_m$. 

\bigskip

This remark follows from the well known correspondence between the irreducible components of the characteristic variety $V^1(M)$ passing through the origin and the pencils on $M$, see for instance \cite{D3} or \cite{Su13}.
In most of the examples we know, one has in addition
$$\dim H^1(M, _{\rho(m)}\C)=\dim H^1(M, _{\rho}\C)=\dim W_m-1,$$
for $\rho \in W_m $ generic. Such an equality implies that the component $W_m$ is unique in view of Proposition 6.9 in \cite{ACM}. 

This equality fails however for the Ceva pencil described in Theorem \ref{3nets} $(1)$. Using the description of the corresponding resonance variety given in Example 2.14 in \cite{Su13}, we see that in this case there are $4$ irreducible components of dimension two passing through  the character $\rho(3)$ (and through its conjugate). See also Example 5.9 in loc.cit.
 \end{remark}

\bibliographystyle{amsalpha}

\begin{thebibliography}{A}

\bibitem{ACM} E. ~Artal-Bartolo, J.I.~ Cogolludo-Agustin, D. ~Matei, {\em Characteristic varieties of quasi-projective manifolds and orbifolds}, Geom. Top. 17 (2013), no. 1, 273--309.

\bibitem{ACTY} M.~Amram, Moshe Cohen, Mina Teicher, Fei Ye
{\em Moduli spaces of ten-line arrangements with double and triple points},
\href{http://arxiv.org/abs/1306.6105}{arXiv:1306.6105} 

\bibitem{CL} J.I. Cogolludo-Agustin, A. Libgober, Mordell-Weil groups of elliptic threefolds and the Alexander module of plane curves, arXiv: 1008.2018v2, to appear in Crellel's Journal.


\bibitem{CO} D.~Cohen, P.~Orlik,
{\em Arrangements and local systems},
Math. Res. Lett. \textbf{7} (2000), 299--316.

\bibitem{CS} D.~C.~Cohen, A.~I.~Suciu,
On Milnor fibrations of arrangements, 
J. London Math. Soc.  {\bf 51} (1995), no.~2, 105--119.

\bibitem{D2} A. Dimca,
{\em Sheaves in Topology},  Universitext, Springer-Verlag, 2004.

\bibitem{D3} A. Dimca,
{\em Characteristic varieties and constructible sheaves}, 
Rend. Lincei Mat. Appl. \textbf{18} (2007),
365--389.

\bibitem{Dimca} A.~Dimca, {\em Monodromy of triple point line arrangements}, 
\href{http://arxiv.org/pdf/1107.2214.pdf}{arxiv:1107.2214} 


\bibitem{DP} A.~Dimca and S.~Papadima,  {\em Finite Galois covers, cohomology jump loci, formality properties,
and multinets}, Ann. Scuola Norm. Sup. Pisa Cl. Sci \textbf{5}, Vol. X (2011), 253--268.

\bibitem{FY} M. Falk, S. Yuzvinsky,
{\em Multinets, resonance varieties, and pencils of plane curves}, 
Compositio Math. \textbf{143} (2007), no.~4, 1069--1088.

\bibitem{Gr}  B. ~Grunbaum.
{\em Configurations of points and lines}, vol. 103,
Graduate Studies in Mathematics, Amer. Math. Soc., Providence, RI, 2009.

\bibitem{K} Y. ~Kawahara,
{\em The non-vanishing cohomology of Orlik-Solomon algebras},
Tokyo J. Math. \textbf{30} (2007), no.1, 223--238.

\bibitem{L} S.~Lang,
{\em Algebra},
Addison--Wesley, Reading, Massachusetts, 1971.

\bibitem{Libgober}A.~Libgober, {\em On combinatorial invariance of the cohomology of Milnor fiber of arrangements
and Catalan equation over function field},   Arrangements of hyperplanes, Sapporo 2009, 175--187, Adv. Stud. Pure Math., 62, Math. Soc. Japan, Tokyo, 2012. 

\bibitem{LY}A.~Libgober, S.~Yuzvinsky, {\em Cohomology of the Orlik-Solomon algebras and local systems} Com-
positio Math. \textbf{121} (2000), no. 3, 337--36.

\bibitem{MP} A.~Macinic, S. ~Papadima {\em On the monodromy action on Milnor fibers of graphic arrangements}, Topology and its Applications \textbf{156} (2009), 761--774. 

\bibitem{NY} S. ~Nazir and M. ~Yoshinaga,
\em{On the connectivity of the realization spaces of line arrangements}, Ann. Scuola Norm. Sup. Pisa
\textbf{XI}  (2012), no. 4, 921--937.

\bibitem{OT} P.~Orlik, H.~Terao,
{\em Arrangements of hyperplanes}, Grundlehren Math. Wiss., vol.~300,
Springer-Verlag, Berlin, 1992.

\bibitem{PS1} S.~Papadima, A.~Suciu,
{\em Higher homotopy groups of complements of complex hyperplane
arrangements},
Advances in Math. \textbf{165} (2002), 71--100.

\bibitem{PS} S.~Papadima, A.~Suciu,
{\em The spectral sequence of an equivariant chain complex and homology with local coefficients},
Trans. of the AMS  \textbf{362} (2010), no. 5, 2685--2721.

\bibitem{St} J.~Stipins,
{\em Old and new examples of k-nets in $P^2$
},
\href{http://arxiv.org/pdf/math/0701046v1.pdf}{arXiv:math/0701046v1}

\bibitem{Su} A. ~Suciu, 
{\em Fundamental groups of line arrangements: Enumerative aspects},
 	Contemporary Math., Amer. Math. Soc., \textbf{276} (2001),  43--79.

\bibitem{Su13} A. ~Suciu, 
{\em Hyperplane arrangements and Milnor fibrations}, 
\href{http://arxiv.org/abs/1301.4851}{arXiv:1301.4851}.


\bibitem{Urzua} G.~Urz\'ua, {\em On line arrangements with applications to $3$-nets}, Advances in Geometry \textbf{2}(2012),287--310.

\bibitem{Ye} F. ~Ye,
{\em Classification of Moduli Spaces of Arrangements of 9 Projective Lines}, 
\href{http://arxiv.org/abs/1112.4306}{arXiv:1112.4306v2},
 to appear in
Pacific Journal of Mathematics, 2013.

\bibitem{Y} S.~Yuzvinsky, {\em Realization of finite abelian groups by nets in $P^2$.}, Compos. Math., \textbf{140(6)} (2004), 1614--1624.

\end{thebibliography}

\end{document}